\date{}
\newtheorem{theorem}{Theorem}
\newtheorem{lemma}[theorem]{Lemma}
\newtheorem{cor}[theorem]{Corollary}
\newtheorem{prop}[theorem]{Proposition}
\theoremstyle{definition} 
\newcommand{\<}{\langle{}}
\renewcommand{\>}{\rangle}
\def\pwnabla{\nabla_{\cT_n}}
\def\curl{{\rm curl\,}}
\def\div{{\rm div\,}}
\def\pwdiv{ {\rm div}_{\cT_n}\,}
\newcommand{\jump}[1]{[#1]}
\newcommand{\R}{\ensuremath{\mathbb{R}}}
\newcommand{\N}{\ensuremath{\mathbb{N}}}
\newcommand{\HH}{\ensuremath{\mathbf{H}}}
\newcommand{\LL}{\ensuremath{\mathbf{L}}}
\newcommand{\nn}{\ensuremath{\mathbf{n}}}
\newcommand{\qq}{\ensuremath{\boldsymbol{q}}}
\newcommand{\vv}{\ensuremath{\boldsymbol{v}}}
\newcommand{\ww}{\ensuremath{\boldsymbol{w}}}
\newcommand{\zz}{\ensuremath{\boldsymbol{z}}}
\newcommand{\cT}{\ensuremath{\mathcal{T}}}
\newcommand{\cS}{\ensuremath{\mathcal{S}}}
\newcommand{\cR}{\ensuremath{\mathcal{R}}}
\newcommand{\cB}{\ensuremath{\mathcal{B}}}
\newcommand{\FF}{\ensuremath{\mathbf{F}}}
\newcommand{\ssigma}{{\boldsymbol\sigma}}
\newcommand{\ttau}{{\boldsymbol\tau}}
\renewcommand{\qq}{{\boldsymbol{q}}}
\newcommand{\uu}{\boldsymbol{u}}
\newcounter{constantsnumber}
\def\setc#1{
  \ifthenelse{\equal{#1}{poinc}}{C_{\rm edge}}{ % estimating edge contributions
   \refstepcounter{constantsnumber}
   \label{const#1}C_{\theconstantsnumber}}}
\def\c#1{
  \ifthenelse{\equal{#1}{poinc}}{C_{\rm edge}}{ % estimating edge contributions
    C_{\ref{const#1}}}}
\title{A time-stepping DPG scheme for the heat equation
\thanks{Supported by CONICYT through FONDECYT projects 1150056, 3150012,
        and Anillo ACT1118 (ANANUM).}}
\author{
Thomas~F\"uhrer$^\dagger$
\and
Norbert Heuer$^\dagger$
\and
Jhuma Sen Gupta\thanks{
Facultad de Matem\'aticas, Pontificia Universidad Cat\'olica de Chile,
Avenida Vicu\~na Mackenna 4860, Santiago, Chile,
email: {\tt \{tofuhrer,nheuer,jsengupta\}@mat.uc.cl}}}
\begin{document}
\maketitle
\begin{abstract}
We introduce and analyze a discontinuous Petrov-Galerkin method with optimal test functions
for the heat equation. The scheme is based on the backward Euler time stepping and uses an ultra-weak
variational formulation at each time step. We prove the stability of the method for
the field variables (the original unknown and its gradient weighted by the square root
of the time step) and derive a C\'ea-type error estimate. For low-order approximation spaces
this implies certain convergence orders when time steps are not too small in comparison with mesh sizes.
Some numerical experiments are reported to support our theoretical results.

\bigskip
\noindent
{\em Key words}:
heat equation, parabolic problem, DPG method with optimal test functions,
least squares method, ultra-weak formulation, backward Euler scheme, Rothe's method

\noindent
{\em AMS Subject Classification}: 65M60, 65M12, 65M15
\end{abstract}

%===================================================================================================
\section{Introduction}

In recent years the discontinuous Petrov-Galerkin method with optimal test functions (DPG method)
has been established as a discretization technique that can deliver robust error control
for singularly perturbed problems. This includes convection-dominated diffusion
\cite{DemkowiczH_13_RDM,NiemiCC_13_ASD,ChanHBTD_14_RDM,BroersenS_14_RPG} and 
reaction-dominated diffusion \cite{HeuerK_RDM}, the latter also in combination with 
boundary elements \cite{FuehrerH_RCD}. In this paper we propose an extension of
the DPG framework to the heat equation. Indeed, there is no need to analyze yet another
method for this equation. Numerical methods for parabolic problems are well established,
see, e.g., Thom\'ee's book \cite{Thomee_06_GFE}. However, our aim is to
robustly discretize singularly perturbed parabolic problems. In this paper we propose
a framework that can be easily extended to such problems, though a proof of robustness
is not necessarily straightforward and left to future research.

A natural way to approximate parabolic problems by the DPG method is to apply its
framework in a time-domain setting. The DPG discretization will be robust in the
energy norm and, having chosen a variational formulation, it ``only'' remains to check
which variables are controlled by the energy norm and in which norms.
This is the strategy that Ellis, Chan and Demkowicz \cite{EllisCD_RDM} persued to
deal with time-dependent convection-dominated diffusion. Their analysis proves a robust
control of the primary field variable in $L^2$, but a control of its gradient is
not guaranteed. In this paper we apply a classical time-stepping approach, combined with
the DPG method, to gain control also of this second field variable.

Standard methodology for parabolic problems is the \emph{method of lines}, i.e.,
to transform them into systems of ordinary differential equations, e.g.,
by using a Galerkin discretization, and to solve them by using time-stepping methods
based on finite differences. Theoretical studies usually also follow these steps:
analyze the semi-discrete setting and the fully discrete scheme.
By design, DPG analysis is based on the continuous setting and this implies stability
of the discretization and convergence. Discrete features enter the analysis only when
taking into account the fact that optimal test functions have to be approximated
(we do not deal with this effect here). Therefore, combining time-stepping with
the DPG method does not require to treat the fully discrete scheme differently than
the semi-discrete one. In this setting, semi-discrete refers to the system that is
continuous in space and discrete in time. First discretizing in time and then
in space is also known as Rothe's method \cite{Rothe_30_ZPR}.
It has the advantage over the method of lines that space discretization
can be chosen independently for different time steps in a straightforward way.
In fact, our analysis does not require that approximating spaces at different
time steps are related.
The viewpoint of Rothe's method is relevant when considering small time steps
which introduce a singular perturbation and can generate spurious space oscillations,
cf., e.g., \cite{Bornemann_90_AMA,BochevGS_04_ISS,Harari_04_SSF}.
In fact, our DPG scheme does not require stabilization as the methods in
\cite{BochevGS_04_ISS,Harari_04_SSF}. Our choice of test functions guarantees
(in the ideal case) that errors are controlled in a robust way.

We discretize the heat equation by the backward Euler method in time. We then obtain
a family of formally singularly perturbed reaction-diffusion problems, one for each time step.
These problems are solved robustly by a DPG scheme, i.e., we have to take the
time-step parameter into account both for the setting and in the analysis.
There is a catch with this approach that makes its analysis tricky. Having
an approximation at each time step (which should be robust), the effect of these
approximations accumulate over time so that stability in the final time
(uniformly in the time steps) is not guaranteed in general. This stability is
achievable only when showing step-wise stability with constant \emph{one}
(or a constant that tends to $1$ sufficiently fast). Time-stepping schemes
are prone to error accumulation over time and therefore, standard elliptic analysis
at each time step is not sufficient to control the whole setting.

The situation is different when using Bubnov-Galerkin approximations in space.
In those cases there is space symmetry between the ansatz and test sides so that
the analysis can take into account the parabolic equation including time derivative.
For instance, testing the error equation with the error $e$ produces the term
$(\dot e,e)=\frac 12 \frac d{dt} (e,e)$ which can be integrated in time.
In the Petrov-Galerkin setting this trick is not applicable, at least not in this
simple form.

The DPG method can be interpreted as a least-squares or minimum residual method.
In fact, there are proposals to use least-squares approaches in space combined
with time stepping for parabolic problems. Bramble and Thom\'ee \cite{BrambleT_72_SLS}
have analyzed this combination already in 1972. As in our case,
their analysis faces the challenge to control the accumulation of space errors over time.
They proposed to compensate this by high-order space approximations and using
splines of higher regularity than necessary for standard Galerkin approximations.
In this paper, we derive estimates that are independent of any particular
choice of discrete ansatz spaces. As mentioned, DPG analysis is done at the
continuous level and implies stability and convergence of the discrete scheme
(of course, under the assumption of sufficient regularity and approximation
properties of discrete spaces). It is then left to decide whether one wants to use
higher order approximations or sufficiently fine space meshes to control the
time accumulation of space errors.

Let us note that a different strategy is to use least-squares both in space and time
\cite{MajidiS_01_LSG,MajidiS_02_LSG}. Then one automatically gains control of
the error in space and time, but sacrifices the simplicity and efficiency of time-stepping
approaches. It would be interesting to study such a least-squares strategy in time,
combined with DPG in space. But here we analyze a simple time-stepping procedure and
the objective is two-fold. On the one hand, we are interested in a general approach
that in principle extends to singularly perturbed problems. On the other hand,
we want to provide an analysis for the heat equation that is as sharp as possible.
Since there is not much flexibility in proving stability of time-stepping schemes,
we expect our techniques to be useful for parabolic problems beyond the heat equation.

In the next section we present the model problem, introduce the time discretization
and DPG setting, and state our main results. Theorem~\ref{thm_stab} states the stability
of our fully discrete scheme and Theorem~\ref{thm_Cea} gives an abstract
error estimate. In Corollary~\ref{cor_conv} we indicate the convergence orders
for the case of lower order approximations. In Section~\ref{sec_anal} we collect 
some technical results (stability of adjoint problems and norm equivalences in
the ansatz space) and prove the main theorems. Some numerical experiments are
reported in Section~\ref{sec_num}.

Throughout the paper, except for Corollary~\ref{cor_conv}, there are no generic
or unspecified constants.

%===================================================================================================
\section{Model problem and DPG scheme}

Let $\Omega\subset\R^d$ ($d=2,3$) be a bounded simply connected Lipschitz domain with boundary
$\Gamma=\partial\Omega$. Our model problem is the heat equation
\begin{subequations} \label{heat}
\begin{alignat}{2}
    \dot u - \Delta u &= f    &\qquad& \text{in} \quad \Omega\times (0,T], \label{heata} \\
    u                 &= 0    && \text{on} \quad \Gamma\times (0,T], \label{heatb}\\
    u                 &= u_0  && \text{in} \quad \Omega\times \{0\}. \label{heatc}
\end{alignat}
\end{subequations}
Here, $\dot u:=\frac{\partial u}{\partial t}$, and we will assume that the initial datum $u_0$
and forcing term $f$ are smooth enough (so that all the norms of $u$ in this paper do exist).

In the following, we present our time-stepping DPG scheme and state all the main results.
In \S\ref{sec_21} we start with introducing some notation and standard function spaces.
We also define our time steps and space meshes, some of the function spaces depend on.
Section~\ref{sec_22} introduces our backward Euler semi-discrete scheme and recalls its
well-posedness and stability (Proposition~\ref{prop_stab}). The fully discrete scheme is
defined in \S\ref{sec_23}. There, we also state all the main results, stability in
Theorem~\ref{thm_stab}, quasi-optimal convergence in Theorem~\ref{thm_Cea}, and some convergence
properties in Corollary~\ref{cor_conv}.

%===================================================================================================
\subsection{Function spaces and space-time discretization} \label{sec_21}

For a Lebesgue measurable set $\mathcal{M}\subset\R^{d}$ ($d=1,2,3$),
$L^2(\mathcal{M})$, $H^1(\mathcal{M})$, $H_0^1(\mathcal{M})$ and $H^{-1}(\mathcal{M})$
are the standard Sobolev spaces with the usual norms. In particular, we denote the norms
of $L^2(\mathcal{M})$ by $\|\cdot\|_{\mathcal{M}}$ induced by the inner product
$(\cdot, \cdot)_{\mathcal{M}}$ and skip the index $\mathcal{M}$ when $\mathcal{M} = \Omega$.
We will also use the vector valued function spaces $\LL^2(\mathcal{M})$,
$\HH(\div,\mathcal{M})$ and $\HH(\curl,\mathcal{M})$ with the usual norms.
For a real Banach space $\cB$ with norm $\|\cdot\|_\cB$, we also need the space-time function spaces
\[
   L^\infty(0,T;\cB)
   =
   \Bigl\{v:\; (0,T)\rightarrow \cB;\; \sup_{t\in (0,T)} \|v(t)\|_\cB < \infty \Bigr\},
\]
\[
   W^{m,1}(0,T;\cB)
   =
   \Bigl\{v:\; (0,T)\rightarrow \cB;\; \int_0^T \sum_{l=0}^m
                                          \Bigl\|\frac {\partial^l v(t)}{\partial t^l}\Bigr\|_\cB
                                       \,\mathrm{d}t < \infty
   \Bigr\}
   \quad (m\in\N).
\]
Now, let us introduce space-time decompositions.
We use discrete time steps $0=t_0<t_1\cdots<t_N=T$ and denote $k_n:= t_n-t_{n-1}$.
Corresponding to the time steps $t_n$, we consider a family $(\cT_n)_{n=0}^N$
of shape regular conforming partitions of $\bar{\Omega}$ into a set of disjoint
elements $K$ with Lipschitz boundary $\partial K$.
To each mesh $\cT_n$ we associate a skeleton $\cS_n$ that consists of the
boundaries of elements of $\cT_n$ (a precise definition is not required since we will
define the functionals supported there).

Broken Sobolev spaces play an important role in DPG techniques since they are used to localize
the calculation of optimal test functions, cf.~\cite{DemkowiczG_11_ADM}.
In our case, we have the families of broken spaces
\begin{align*}
   H^1(\cT_n)
   &:=
   \{ v \in L^2(\Omega);\; v|_K \in H^1(K) \quad\forall K \in\cT_n\},\\
   \HH(\div,\cT_n)
   &:=
   \{\ttau \in \LL^2(\Omega);\; \ttau|_K \in\HH(\div,K) \quad\forall K \in\cT_n\}
\end{align*}
and trace spaces
\begin{align*}
   H_{00}^{1/2}(\cS_n)
   &:=
   \Bigl\{\hat w\in\prod_{K\in\cT_n} H^{1/2}(\partial K);\;
     \exists w\in H^1_0(\Omega):\
     \hat w|_{\partial K} = w|_{\partial K}\ \forall K\in\cT_n\Bigr\},\\
   H^{-1/2}(\cS_n)
   &:=
   \Bigl\{\hat q\in\prod_{K\in\cT_n} H^{-1/2}(\partial K);\;
     \exists\qq\in\HH(\div,\Omega):\
     \hat q|_{\partial K} = (\qq\cdot\nn_K)|_{\partial K}\ \forall K\in\cT_n\Bigr\}.
\end{align*}
Here, $\nn_K$ denotes the unit outward normal vector on $\partial K$ ($K\in\cT_n$).
Norms for $H^1(\cT_n)$ and $\HH(\div,\cT_n)$ are defined later,
and the trace spaces are equipped with the weighted trace norms
\begin{subequations} \label{tracenorms}
\begin{align}
   \|\hat w\|_{1/2,\cS_n}
   &:=
   \inf\{(\|w\|^2 + k_n\|\nabla w\|^2)^{1/2};\;
         w\in H_0^1(\Omega),\ \hat w|_{\partial K} = w|_{\partial K}\ \forall K\in\cT_n\},
   \label{tracenorm_p}
   \\
   \|\hat q\|_{-1/2,\cS_n}
   &:=
   \inf\{ (\|\qq\|^2 + k_n \|\div\qq\|^2)^{1/2};\; \qq\in \HH(\div,\Omega),\
                       \hat q|_{\partial K} = (\qq\cdot\nn_K)|_{\partial K}\ \forall K\in\cT_n\}.
   \label{tracenorm_m}
\end{align}
\end{subequations}
Our variational formulation will give rise to the terms
\begin{align*}
   \<\jump{\ttau\cdot\nn},\hat w\>_{\cS_n}
   &:=
   \sum_{K\in\cT_n} \<\ttau\cdot\nn_K,\hat w\>_{\partial K}
   &&(\ttau\in \HH(\div,\cT_n),\ \hat w\in H_{00}^{1/2}(\cS_n)),\\
   \<\jump{v},\hat q\>_{\cS_n}
   &:=
   \sum_{K\in\cT_n} \<v,\hat q\>_{\partial K}
   &&(v\in H^1(\cT_n),\ \hat q\in H^{-1/2}(\cS_n)).
\end{align*}
Here, we will not distinguish between $\<\jump{v},\hat q\>_{\cS_n}$ and $\<\hat q,\jump{v}\>_{\cS_n}$,
and similarly for $\jump{\ttau\cdot\nn}$.
Formally, $\jump{\ttau\cdot\nn}$ and $\jump{v}$ are linear functionals acting on
$ H_{00}^{1/2}(\cS_n)$ and $H^{-1/2}(\cS_n)$, respectively, and are measured accordingly:
\begin{align*}
   \|\jump{\ttau\cdot\nn}\|_{-1/2,\cS_n'}
   &:=
   \sup_{\hat w\in H_{00}^{1/2}(\cS_n)} \frac{\<\jump{\ttau\cdot\nn}, \hat w\>_{\cS_n}}
                                             {\|\hat w\|_{1/2,\cS_n}},\qquad
   \|\jump{v}\|_{1/2,\cS_n'}
   :=
   \sup_{\hat q\in H^{-1/2}(\cS_n)} \frac{\<\jump{v}, \hat q\>_{\cS_n}}
                                         {\|\hat q\|_{-1/2,\cS_n}}.
\end{align*}
Here and in the following, suprema will always be taken over nonzero elements.

Indeed, $\jump{v}$ and $\jump{\ttau\cdot\nn}$ are related to the jumps of $v$ and $\ttau$,
and $\jump{v}=0$, $\jump{\ttau\cdot\nn}=0$ for $v\in H^1_0(\Omega)$ and $\ttau\in\HH(\div,\Omega)$,
respectively. Note that $\jump{\ttau\cdot\nn}=0$ does not require that $\ttau$ has normal
component zero on $\Gamma$ since, in our definition, it is only tested with functions that have
zero trace on $\Gamma$.

%===================================================================================================
\subsection{Semi-discrete scheme} \label{sec_22}

DPG analysis is based on studying variational formulations. The analysis of discrete DPG schemes
is inherited from the continuous case. Since we aim at time-stepping schemes we first apply
a time discretization to \eqref{heat} and analyze the resulting scheme in a space-variational form.
For simplicity we consider a backward Euler discretization. Using the previously defined
time steps $0=t_0 < t_1 < \ldots < t_N=T$, we obtain the semi-discrete approximation
\begin{subequations} \label{semi}
\begin{alignat}{2}
    \frac 1{k_n}u^n - \Delta u^n &= f^n+\frac 1{k_n}u^{n-1}
    &\qquad& \text{in}\quad\Omega, \label{semia} \\
    u^n &= 0    && \text{on}\quad\Gamma, \label{semib}
\end{alignat}
\end{subequations}
for $n=1,\ldots,N$, with initial condition $u^0=u_0$ in $\Omega$.
It defines approximations $u^n$ to $u(t_n):=u(\cdot,t=t_n)$, $n=1,\ldots,N$.
Here, $f^n:=f(t_n)$ where, for space-time functions $v$,
we generically denote by $v(t)$ the space function $v$ at time $t$.

Of course, for $f(t)\in L^2(\Omega)$ ($t\in (0,T]$), \eqref{semi} is uniquely solvable
with $u^n\in H^1_0(\Omega)$, $n=1,\ldots,N$.
In principle, we can rewrite \eqref{semi} in any variational form that renders the Laplacian
well posed. The strong form is \eqref{semi} and one can equivalently use the standard formulation
(based on the Dirichlet bilinear form), an ultra-weak formulation, or any variant in between,
set in global spaces on $\Omega$ or broken variants set in spaces on $\cT_n$.
For details we refer to \cite{CarstensenDG_BSF}.

Rewriting the Laplacian in \eqref{semia} in variational form, one has to add
the term $u^n/k_n$ which makes the formulation formally singularly perturbed. In other words,
one naturally tends to analyze the resulting formulation with particular focus on the parameter $k_n$.
However, a well-posed variational form of \eqref{semi} will be equivalent to \eqref{semi}
(with \eqref{semia} taken in $H^{-1}(\Omega)$) and, thus, automatically stable
(cf.~Proposition~\ref{prop_stab} below).

In this paper we focus on an ultra-weak formulation.
Our aim is to derive a fully discrete approximation based on this ultra-weak form,
and to provide a continuous stability analysis that implies stability of the fully discrete approximation.

We take the ultra-weak variational formulation from \cite{DemkowiczG_11_ADM}. More precisely,
we introduce $\ssigma^n:=\nabla u^n$ as further unknowns and define the independent trace
variables $\hat u^n$, $\hat\sigma^n$ with $\hat u^n|_{\partial K}=u^n|_{\partial K}$,
respectively $\hat\sigma^n|_{\partial K}=(\ssigma^n\cdot\nn_K)|_{\partial K}$,
$K\in\cT_n$, $n=1,\ldots,N$. Replacing $\nabla u^n=\ssigma^n$ in \eqref{semia} and testing with
$v\in H^1(\cT_n)$, testing $\nabla u^n=\ssigma^n$ with $\ttau\in\HH(\div,\cT_n)$, and
integrating by parts, we obtain
\begin{align*}
   \frac 1{k_n}(u^n,v) + (\ssigma^n,\nabla v)_{\cT_n} - \<\hat\sigma^n,\jump{v}\>_{\cS_n}
   &=
   (f^n+\frac 1{k_n}u^{n-1},v),\\
   (\ssigma^n,\ttau) + (u^n,\div\ttau)_{\cT_n} - \<\hat u^n,\jump{\ttau\cdot\nn}\>_{\cS_n}
   &=
   0,
\end{align*}
with $(\cdot,\cdot)_{\cT_n}:=\sum_{K\in\cT_n} (\cdot,\cdot)_K$.
In the following we will denote unknown functions by
$\uu^n=(u^n,\ssigma^n,\hat u^n,\hat\sigma^n)$ (with or without upper index $n$), and test functions
by $\vv=(v,\ttau)$. Note that the bilinear form
\[
   b(\uu,\vv) :=  (u,\div\ttau)_{\cT_n} + (\ssigma,\nabla v+\ttau)_{\cT_n} 
                 - \<\hat u,\jump{\ttau\cdot\nn}\>_{\cS_n} - \<\hat\sigma,\jump{v}\>_{\cS_n}
\]
is nothing but the ultra-weak bilinear form of the Laplacian from \cite{DemkowiczG_11_ADM}.
We obtain the following ultra-weak variational form of \eqref{semi},
\begin{align} \label{semi_weak}
   &\uu^n\in X^n:=L^2(\Omega)\times\LL^2(\Omega)\times H^{1/2}_{00}(\cS_n)\times H^{-1/2}(\cS_n):
   \nonumber\\
   &b_e^n(\uu^n,\vv) = L_e^n(u^{n-1};\vv)
   \qquad\forall\vv\in Y^n:=H^1(\cT_n)\times \HH(\div,\cT_n),\ n=1,\ldots, N,\\
   &\text{with}\quad u^0=u_0,\nonumber
\end{align}
and where
\[
   b_e^n(\uu,\vv):= \frac 1{k_n}(u,v)+b(\uu,\vv)\quad\text{and}\quad
   L_e^n(u;\vv) := (f^n+\frac 1{k_n}u,v).
\]
In some sense, the bilinear and linear forms $b_e^n$ and $L_e^n(u,\cdot)$ extend the corresponding
forms of the Poisson equation. Instead of the Poisson equation, our variational forms \eqref{semi_weak}
represent singularly perturbed reaction-diffusion problems at $N$ time steps.
Note, however, that the perturbations are singular only in a mild sense since, multiplying by $k_n$,
the resulting small diffusion parameter also appears as a factor on the right-hand side.

For completeness let us recall the stability of our semi-discrete scheme.

\begin{prop} \label{prop_stab}
For $u_0,f(t)\in L^2(\Omega)$ ($t\in(0,T]$) the semi-discrete ultra-weak formulation
\eqref{semi_weak} is uniquely solvable, and there holds
(with $\uu^n=(u^n,\ssigma^n,\hat u^n,\hat\sigma^n)$, $n=1,\ldots,N$, denoting the solution)
\[
   \Bigl(\|u^n\|^2 + k_n\|\ssigma^n\|^2\Bigr)^{1/2}
   \le
   \|u^{n-1}\| + k_n\|f^n\|
   \quad (n=1,\ldots,N),
\]
that is,
\[
   \Bigl(\|u^N\|^2 + k_N\|\ssigma^N\|^2\Bigr)^{1/2}
   \le
   \|u_0\| + \sum_{n=1}^N k_n\|f^n\|.
\]
\end{prop}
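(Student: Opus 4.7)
The plan is to reduce the ultra-weak formulation \eqref{semi_weak} to the strong semi-discrete system \eqref{semi} via an equivalence argument, then derive the one-step bound by the classical energy test with $u^n$, and finally iterate.

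First I would establish well-posedness of \eqref{semi_weak}. For each $n$, given $u^{n-1}\in L^2(\Omega)$ and $f^n\in L^2(\Omega)$, the strong reaction-diffusion problem \eqref{semi} admits a unique solution $u^n\in H^1_0(\Omega)$ (standard Lax--Milgram on $H^1_0(\Omega)$ with the bilinear form $\frac{1}{k_n}(\cdot,\cdot)+(\nabla\cdot,\nabla\cdot)$). From such $u^n$ I recover the remaining components: $\ssigma^n:=\nabla u^n\in\LL^2(\Omega)$, and since $\div\ssigma^n=\Delta u^n=\tfrac{1}{k_n}u^n-f^n-\tfrac{1}{k_n}u^{n-1}\in L^2(\Omega)$ we get $\ssigma^n\in\HH(\div,\Omega)$; the traces $\hat u^n\in H^{1/2}_{00}(\cS_n)$ and $\hat\sigma^n\in H^{-1/2}(\cS_n)$ are then obtained as the skeleton restrictions of $u^n$ and $\ssigma^n\cdot\nn$. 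Element-wise integration by parts shows $\uu^n=(u^n,\ssigma^n,\hat u^n,\hat\sigma^n)$ solves \eqref{semi_weak}. Uniqueness in the ultra-weak setting follows by running the same construction in reverse: any solution of \eqref{semi_weak} tested with $\vv=(v,0)$, $v\in H^1_0(\Omega)$, and $\vv=(0,\ttau)$, $\ttau\in\HH(\div,\Omega)$, yields back \eqref{semi} in $H^{-1}(\Omega)$, so the solution must coincide with the one just constructed. This mirrors the equivalence results in \cite{DemkowiczG_11_ADM,CarstensenDG_BSF} referenced earlier.

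The one-step stability is then a classical energy identity. Using the identification $u^n\in H^1_0(\Omega)$ with $\ssigma^n=\nabla u^n$, I multiply \eqref{semia} by $u^n$ and integrate by parts, or equivalently test \eqref{semi_weak} with $\vv=(u^n,\ssigma^n)$ and use $(\ssigma^n,\nabla u^n)_{\cT_n}=(\ssigma^n,\ssigma^n)$ together with $\<\hat\sigma^n,\jump{u^n}\>_{\cS_n}=0$ and $\<\hat u^n,\jump{\ssigma^n\cdot\nn}\>_{\cS_n}=0$ (both vanish because $u^n\in H^1_0(\Omega)$ and $\ssigma^n\in\HH(\div,\Omega)$). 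After multiplying through by $k_n$ this yields
\[
   \|u^n\|^2 + k_n\|\ssigma^n\|^2 = (u^{n-1},u^n) + k_n(f^n,u^n).
\]
Applying Cauchy--Schwarz on the right and bounding $\|u^n\|\le (\|u^n\|^2 + k_n\|\ssigma^n\|^2)^{1/2}$, I divide by that quantity to obtain the claimed step-wise estimate.

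Finally, dropping the nonnegative $k_n\|\ssigma^n\|^2$ on the left gives $\|u^n\|\le\|u^{n-1}\|+k_n\|f^n\|$, which I iterate from $n=N-1$ down to $n=0$ and insert into the one-step bound at $n=N$ to conclude
\[
   \bigl(\|u^N\|^2+k_N\|\ssigma^N\|^2\bigr)^{1/2}\le\|u^{N-1}\|+k_N\|f^N\|\le\|u_0\|+\sum_{n=1}^N k_n\|f^n\|.
\]
I expect the main obstacle to be the equivalence step: one must carefully verify that all four ultra-weak components are well defined (in particular that $\ssigma^n$ lies in $\HH(\div,\Omega)$ so the normal-trace $\hat\sigma^n$ exists in $H^{-1/2}(\cS_n)$) and that the boundary terms on the skeleton vanish when tested with the conforming fields used in the energy identity. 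Once equivalence is in place, the stability argument is the standard backward Euler energy estimate with constant exactly one, which is precisely what is needed to avoid error accumulation in time.
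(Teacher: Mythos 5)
Your proposal is correct and follows essentially the same route as the paper: reduce the ultra-weak formulation to the strong semi-discrete problem in $H^{-1}(\Omega)$, test with $u^n$ to get the energy identity $\|u^n\|^2+k_n\|\ssigma^n\|^2=k_n(f^n,u^n)+(u^{n-1},u^n)$, apply Cauchy--Schwarz and divide, then iterate. The paper compresses the equivalence step into ``by standard arguments'' (citing the surrounding discussion of \cite{CarstensenDG_BSF}), whereas you spell it out, but the substance is identical.
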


\begin{proof}
By standard arguments, \eqref{semi_weak} is equivalent to
\[
   \frac 1{k_n}u^n - \Delta u^n = f^n+\frac 1{k_n}u^{n-1}
   \quad\text{in}\quad H^{-1}(\Omega),\quad n=1,\ldots,N,
\]
that is, making use of $\ssigma^n=\nabla u^n$ ($n=1,\ldots,N$),
\[
   (u^n,u^n) + k_n(\ssigma^n,\ssigma^n) = k_n (f^n,u^n) + (u^{n-1},u^n)
   \le
   \Bigl(k_n\|f^n\| + \|u^{n-1}\|\Bigr)\,\|u^n\|.
\]
This proves the first assertion. The second one follows by repeated application of this bound.
\end{proof}

A control of the trace variables is guaranteed by a simple application
of the definition of trace norms.

\begin{cor} \label{cor_stab}
Under the assumptions of Proposition~\ref{prop_stab} there holds
\begin{align*}
   &\|\hat u^n\|_{1/2,\cS_n}
   \le \Bigl(\|u^n\|^2 + k_n \|\ssigma^n\|^2\Bigr)^{1/2}
   \le \|u^{n-1}\| + k_n\|f^n\|,\\
   &k_n^{1/2}\|\hat\sigma^n\|_{-1/2,\cS_n}
   \le (1+\sqrt{2}) \Bigl(\|u^{n-1}\| + k_n\|f^n\|\Bigr)
\end{align*}
for $n=1,\ldots,N$, and, in particular,
\begin{align*}
   &\|\hat u^N\|_{1/2,\cS_N}
   \le \|u_0\| + \sum_{n=1}^N k_n\|f^n\|,\\
   &k_N^{1/2}\|\hat\sigma^N\|_{-1/2,\cS_N}
   \le (1+\sqrt{2}) \Bigl(\|u_0\| + \sum_{n=1}^N k_n\|f^n\|\Bigr).
\end{align*}
\end{cor}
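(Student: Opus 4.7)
The plan is to derive both bounds by choosing natural elements in the infima defining the two trace norms, and then to invoke Proposition~\ref{prop_stab}. For $\hat u^n$, observe that $u^n\in H^1_0(\Omega)$ already has the required trace on each $\partial K$ ($K\in\cT_n$) and satisfies $\nabla u^n=\ssigma^n$, so $u^n$ is admissible in \eqref{tracenorm_p} and
\[
\|\hat u^n\|_{1/2,\cS_n}\le\bigl(\|u^n\|^2+k_n\|\ssigma^n\|^2\bigr)^{1/2};
\]
Proposition~\ref{prop_stab} immediately finishes the first inequality.

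For $\hat\sigma^n$ the strategy is the same, but it relies on the strong form \eqref{semia} to supply the missing divergence regularity. From \eqref{semia} one has $\Delta u^n\in L^2(\Omega)$, hence $\ssigma^n=\nabla u^n\in\HH(\div,\Omega)$ together with the algebraic identity $k_n\div\ssigma^n=u^n-u^{n-1}-k_n f^n$. Inserting $\ssigma^n$ into \eqref{tracenorm_m} and multiplying by $k_n^{1/2}$ gives
\[
k_n^{1/2}\|\hat\sigma^n\|_{-1/2,\cS_n}\le\bigl(k_n\|\ssigma^n\|^2+k_n^2\|\div\ssigma^n\|^2\bigr)^{1/2}\le k_n^{1/2}\|\ssigma^n\|+k_n\|\div\ssigma^n\|,
\]
using $(a^2+b^2)^{1/2}\le a+b$ for $a,b\ge 0$ in the last step. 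The triangle inequality applied to $k_n\|\div\ssigma^n\|=\|u^n-u^{n-1}-k_n f^n\|$ then yields
\[
k_n^{1/2}\|\hat\sigma^n\|_{-1/2,\cS_n}\le\bigl(\|u^n\|+k_n^{1/2}\|\ssigma^n\|\bigr)+\bigl(\|u^{n-1}\|+k_n\|f^n\|\bigr),
\]
and the first bracket is bounded by $\sqrt{2}\,(\|u^n\|^2+k_n\|\ssigma^n\|^2)^{1/2}\le\sqrt{2}\,(\|u^{n-1}\|+k_n\|f^n\|)$ via Cauchy--Schwarz and Proposition~\ref{prop_stab}, producing exactly the constant $1+\sqrt{2}$ in the statement.

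The $N$-step cumulative bounds on $\|\hat u^N\|_{1/2,\cS_N}$ and $k_N^{1/2}\|\hat\sigma^N\|_{-1/2,\cS_N}$ then fall out by iterating the one-step inequality $\|u^n\|\le\|u^{n-1}\|+k_n\|f^n\|$ of Proposition~\ref{prop_stab} down to $u_0$ and substituting at $n=N$ in the per-step estimates just proved. I do not foresee a serious obstacle here; the only conceptual point is the recourse to the strong form of the semi-discrete equation to ensure $\ssigma^n\in\HH(\div,\Omega)$ and to obtain the explicit identity for $k_n\div\ssigma^n$, which is also what drives the proof of Proposition~\ref{prop_stab}.
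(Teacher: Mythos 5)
Your proof is correct and follows essentially the same route as the paper: bound both trace norms by inserting $u^n$ and $\ssigma^n=\nabla u^n$ as admissible competitors in the infima, use $k_n\div\ssigma^n=u^n-u^{n-1}-k_nf^n$ from the strong form together with $\sqrt{a^2+b^2}\le a+b$ and Cauchy--Schwarz to get the constant $1+\sqrt 2$, and then iterate the one-step bound of Proposition~\ref{prop_stab}. No gaps.
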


\begin{proof}
By the definition of the trace norms in $H^{1/2}_{00}(\cS_n)$ and $H^{-1/2}(\cS_n)$, and
since the solutions $\uu^n$ of \eqref{semi_weak} satisfy
\[
   \hat u^n|_{\partial K} = u^n|_{\partial K},\quad
   \hat\sigma^n|_{\partial K} = (\ssigma^n\cdot\nn_K)|_{\partial K}
   \quad (K\in\cT_n),
\]
the first statement follows immediately, and for $\hat\sigma^n$ we obtain
\begin{align*}
   k_n \|\hat\sigma^n\|_{-1/2,\cS}^2
   &\le
   k_n \|\ssigma^n\|^2 + k_n^2 \|\div\ssigma^n\|^2.
\end{align*}
Now, $\div\ssigma^n=\Delta u^n=\frac 1{k_n}(u^n-u^{n-1}) - f^n$, so that
\[
   k_n^2 \|\div\ssigma^n\|^2 \le \Bigl(\|u^n\| + \|u^{n-1}\| + k_n\|f^n\|\Bigr)^2,
\]
i.e., with the previous bound,
\begin{align*}
   k_n^{1/2} \|\hat\sigma^n\|_{-1/2,\cS}
   &\le
   k_n^{1/2}\|\ssigma^n\| + k_n\|\div\ssigma^n\|
   \le
   k_n^{1/2}\|\ssigma^n\|
   + \|u^n\| + \|u^{n-1}\| + k_n\|f^n\|\\
   &\le
   \sqrt{2}\Bigl(\|u^n\|^2 + k_n\|\ssigma^n\|^2\Bigr)^{1/2} + \|u^{n-1}\| + k_n\|f^n\|.
\end{align*}
Using the first statement, this implies the second bound. The remaining two assertions
are obtained by iterated applications of the first bound.
\end{proof}

%===================================================================================================
\subsection{Fully discrete scheme} \label{sec_23}

Our DPG approximation will be based on the test norm(s)
\begin{align} \label{testnorm}
   &\|\vv\|_{Y^n}^2
   :=
   \frac 1{k_n^2} \|v\|^2 + \frac 1{k_n} \|\nabla v\|_{\cT_n}^2
   + \frac 1{k_n} \|\ttau\|^2 + \|\div\ttau\|_{\cT_n}^2\\
   &\text{with}\quad \|\nabla v\|_{\cT_n}^2 := (\nabla v,\nabla v)_{\cT_n}\quad
    \text{and}\quad \|\div\ttau\|_{\cT_n}^2 := (\div\ttau,\div\ttau)_{\cT_n}.
   \nonumber
\end{align}
The corresponding inner product will be denoted by $\llangle\cdot,\cdot\rrangle_{Y^n}$.

Now, for a function $\uu\in X^n$, we define the \emph{optimal test function}
\begin{equation} \label{ttt}
  \Theta^n\uu := (\Theta^n_v\uu,\Theta^n_\tau\uu)\in Y^n\quad\text{by}\quad
   \llangle\Theta^n\uu,\vv\rrangle_{Y^n} = b_e^n(\uu, \vv) \quad\forall\vv\in Y^n.
\end{equation}
Then, selecting discrete subspaces (piecewise polynomial with respect to $\cT_n$ and $\cS_n$)
$X^n_h \subset X^n$ ($n=1,\ldots,N$) and, slightly abusing notation,
a discrete space $X^0_h\subset L^2(\Omega)$, the fully discrete scheme is:
\emph{Find $u^0_h\in X^0_h$ and $\uu^n_h=(u^n_h,\ssigma^n_h,\hat u^n_h,\hat\sigma^n_h)\in X^n_h$
($n=1,\ldots,N$) such that}
\begin{subequations} \label{DPG}
\begin{alignat}{2}
   &b_e^n(\uu_h^n, \vv) = L_e^n(u^{n-1}_h;\vv)
   &\quad& \forall\vv\in Y^n_h:=\Theta^n X^n_h\quad (n=1,\ldots,N) \label{DPGa}\\
   &\text{\emph{with}}\quad (u^0_h,v) = (u_0,v) &&\forall v\in X^0_h. \label{DPGb}
\end{alignat}
\end{subequations}
One of our main results is that the discrete scheme inherits its stability from that of the
semi-discrete variational formulation, cf.~Proposition~\ref{prop_stab}.

\begin{theorem} \label{thm_stab}
For $u_0,f(t)\in L^2(\Omega)$ ($t\in(0,T]$) the DPG scheme \eqref{DPG}
is uniquely solvable and, being
$\uu^n_h=(u^n_h,\ssigma^n_h,\hat u^n_h,\hat\sigma^n_h)$, $n=1,\ldots,N$, its solutions,
there holds
\[
   \Bigl(\|u^n_h\|^2 + k_n\|\ssigma^n_h\|^2\Bigr)^{1/2}
   \le
   \|u^{n-1}_h\| + k_n\|f^n\|
   \quad (n=1,\ldots,N),
\]
that is,
\[
   \Bigl(\|u^N_h\|^2 + k_N\|\ssigma^N_h\|^2\Bigr)^{1/2}
   \le
   \|u_0\| + \sum_{n=1}^N k_n\|f^n\|.
\]
\end{theorem}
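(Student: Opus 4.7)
The plan is to mirror the proof of Proposition~\ref{prop_stab} at the discrete level by exploiting the defining identity of the trial-to-test operator $\Theta^n$. Since $Y^n_h=\Theta^n X^n_h$, the function $\vv:=\Theta^n\uu^n_h$ is an admissible test in~\eqref{DPGa}. Inserting it and using~\eqref{ttt} yields
\[
   \|\Theta^n\uu^n_h\|_{Y^n}^2
   = b_e^n(\uu^n_h,\Theta^n\uu^n_h)
   = L_e^n(u^{n-1}_h;\Theta^n\uu^n_h)
   = \bigl(f^n+\tfrac{1}{k_n}u^{n-1}_h,\,\Theta^n_v\uu^n_h\bigr).
\]
The first summand of the test norm~\eqref{testnorm} immediately gives the weighted estimate $\|v\|\le k_n\|\vv\|_{Y^n}$ for any $\vv=(v,\ttau)\in Y^n$, so Cauchy--Schwarz on the right-hand side absorbs one power of $\|\Theta^n\uu^n_h\|_{Y^n}$ and leaves the one-step residual bound
\[
   \|\Theta^n\uu^n_h\|_{Y^n}\le\|u^{n-1}_h\|+k_n\|f^n\|.
\]

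The crux is to bound the field components of $\uu^n_h$ from above by $\|\Theta^n\uu^n_h\|_{Y^n}$. Since $\|\Theta^n\uu\|_{Y^n}=\sup_{\vv\in Y^n\setminus\{0\}}b_e^n(\uu,\vv)/\|\vv\|_{Y^n}$, what is needed is the continuous inf-sup estimate
\[
   \bigl(\|u\|^2+k_n\|\ssigma\|^2\bigr)^{1/2}
   \le
   \sup_{\vv\in Y^n\setminus\{0\}}\frac{b_e^n(\uu,\vv)}{\|\vv\|_{Y^n}}
   \qquad\forall\,\uu=(u,\ssigma,\hat u,\hat\sigma)\in X^n,
\]
\emph{with constant exactly one}, since any larger constant would prevent the per-step iteration from closing. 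This is the main obstacle, and it is precisely what the $k_n$-weighting in~\eqref{testnorm} is designed to achieve. I would prove it by exhibiting a near-optimal conforming test $\vv\in H^1_0(\Omega)\times\HH(\div,\Omega)$: set $\ttau:=k_n\ssigma-\nabla v$ with $v\in H^1_0(\Omega)$ the unique solution of the reaction-diffusion problem
\[
   \tfrac{1}{k_n}(v,w)+(\nabla v,\nabla w)=(u,w)+k_n(\ssigma,\nabla w)\qquad\forall w\in H^1_0(\Omega).
\]
Reading off from this equation, $\div\ttau=u-\tfrac{1}{k_n}v\in L^2(\Omega)$, hence $\ttau\in\HH(\div,\Omega)$, and the jump terms of $b_e^n(\uu,\vv)$ vanish. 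A short calculation that re-uses the weak equation in expanding $\|\vv\|_{Y^n}^2$ then yields both $b_e^n(\uu,\vv)=\|u\|^2+k_n\|\ssigma\|^2$ and $\|\vv\|_{Y^n}^2=\|u\|^2+k_n\|\ssigma\|^2$, delivering the inf-sup with equality. This is precisely the ``norm equivalence in the ansatz space'' announced for Section~\ref{sec_anal}.

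Combining the two displays gives the asserted per-step bound $(\|u^n_h\|^2+k_n\|\ssigma^n_h\|^2)^{1/2}\le\|u^{n-1}_h\|+k_n\|f^n\|$. Unique solvability of~\eqref{DPG} is then a by-product: the same inf-sup restricted to $X^n_h$ forces injectivity of the discrete operator, which in finite dimensions is enough. To conclude, iterate the per-step bound in $n$ and note that~\eqref{DPGb} defines $u^0_h$ as the $L^2$-projection of $u_0$, so $\|u^0_h\|\le\|u_0\|$; this delivers the cumulative estimate in terms of $\|u_0\|+\sum_{n=1}^N k_n\|f^n\|$.
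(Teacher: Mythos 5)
Your proof is correct and follows essentially the same route as the paper: the one-step bound $\|\Theta^n\uu^n_h\|_{Y^n}\le\|u^{n-1}_h\|+k_n\|f^n\|$ via the $k_n^{-2}\|v\|^2$ term of the test norm, combined with the constant-one lower bound $(\|u\|^2+k_n\|\ssigma\|^2)^{1/2}\le\|\uu\|_{E^n}$ obtained from a conforming test function built from an auxiliary reaction--diffusion problem --- your construction is exactly the extremal case $G=u$, $\FF=k_n\ssigma$ of the paper's Lemma~\ref{la_stab}, which the paper states for general data $(G,\FF)$ and then dualizes. The only loose end is unique solvability: the partial inf-sup you prove controls only $(u,\ssigma)$ and not the trace components, so injectivity of the discrete operator additionally requires the (standard) facts that $\sup_{\vv\in Y^n_h}b_e^n(\cdot,\vv)/\|\vv\|_{Y^n}$ coincides with the full energy norm on $X^n_h$ and that the energy norm is a norm on all of $X^n$ by well-posedness of \eqref{semi_weak}.
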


A proof of this theorem will be given in Section~\ref{sec_pf_stab}.
It will be based on proving norm equivalences in $X^n$ and $Y^n$.
Specifically, we use the following norms in $X^n$ ($n=1,\ldots,N$),
\begin{align*}
   \|\uu\|_{X^n_1}^2
   &:=
   \|u\|^2 + k_n \|\ssigma\|^2 + k_n \|\hat u\|^2_{1/2,\cS_n} + k_n^2 \|\hat\sigma\|^2_{-1/2,\cS_n},\\
   \|\uu\|_{X^n_2}^2
   &:=
   \|u\|^2 + k_n \|\ssigma\|^2 + \|\hat u\|^2_{1/2,\cS_n} + k_n \|\hat\sigma\|^2_{-1/2,\cS_n},
   \quad \uu \in X^n.
\end{align*}
However, central to DPG analysis is the \emph{energy norm}. In our time-stepping scheme we have
a family of energy norms
\begin{equation} \label{energy}
   \|\uu\|_{E^n} := \sup_{\vv\in Y^n} \frac{b_e^n(\uu,\vv)}{\|\vv\|_{Y^n}},
   \quad \uu \in X^n,\ n=1,\ldots,N.
\end{equation}
By the well-posedness of problem \eqref{semi_weak}, these are indeed norms,
i.e., $\uu\in X^n$ with $b_e^n(\uu,\vv)=0$ for any $\vv\in Y^n$ implies that $\uu=0$.

With this notation we can state the second main result.
It shows quasi-optimality of our time-stepping DPG scheme.

\begin{theorem} \label{thm_Cea}
Let $u_0,f(t)\in L^2(\Omega)$ for $t\in(0,T]$, and let
$u$ and $\uu^n_h\in X^n_h$ ($n=1,\ldots,N$), $u^0_h$ denote the solutions of
\eqref{heat} and \eqref{DPG}, respectively. Furthermore, we denote
$\uu:=(u,\nabla u,\hat u,\hat\sigma)$ with $\hat u$, $\hat\sigma$ being the traces of
$u$ and $\nabla u$, respectively, and let $u^n_h$ be the first component of $\uu^n_h$.
Then there holds the following error estimate for $n=1,\ldots,N$:
\begin{align*}
   \|\uu(t_n)-\uu^n_h\|_{E^n}
   &\le
   \|u_0-u^0_h\|
   +
   \sum_{j=1}^n \min_{\ww\in X^j_h} \|\uu(t_j)-\ww\|_{E^j}
   +
   \sum_{j=1}^n \|u(t_n)-u(t_{n-1}) - k_n\dot u(t_n)\|.
\end{align*}
Furthermore, $\|\uu(t_n)-\uu^n_h\|_{X^n_1}$ has the bound above multiplied by
$\sqrt{2}\max\Bigl\{1,\sqrt{4C_\mathrm{PF}^2+6k_n}\Bigr\}$.
Here, $C_\mathrm{PF}$ denotes the Poincar\'e-Friedrichs constant,
i.e., $\|w\|\le C_\mathrm{PF}\|\nabla w\|$ $\forall w\in H^1_0(\Omega)$.

The best approximations in energy norm can be estimated like
\[
   \min_{\ww\in X^n_h} \|\uu(t_n)-\ww\|_{E^n}
   \le
   \sqrt{3} \min_{\ww\in X^n_h} \|\uu(t_n)-\ww\|_{X^n_2},\qquad n=1,\ldots,N.
\]
\end{theorem}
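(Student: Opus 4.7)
The plan is to prove the energy-norm estimate by induction on $n$ and to derive the remaining two assertions as corollaries of norm equivalences between $\|\cdot\|_{E^n}$, $\|\cdot\|_{X^n_1}$ and $\|\cdot\|_{X^n_2}$ that I would establish in Section~\ref{sec_anal}. The backbone of the induction is a Galerkin-type identity built around the $E^n$-best approximation of the continuous solution, so that the arising triangle inequality does not introduce a spurious factor on the best-approximation term.

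I would begin by recording the consistency of the continuous solution with the backward-Euler/ultra-weak setting: writing $\dot u-\Delta u=f$ as $k_n^{-1}u(t_n)-\Delta u(t_n)=f^n+k_n^{-1}u(t_{n-1})+k_n^{-1}\tau^n$ with $\tau^n:=u(t_n)-u(t_{n-1})-k_n\dot u(t_n)$, and passing to the ultra-weak formulation exactly as in the derivation of~\eqref{semi_weak}, gives
\[
   b_e^n(\uu(t_n),\vv)=L_e^n(u(t_{n-1});\vv)+\tfrac{1}{k_n}(\tau^n,v)\qquad\forall\vv\in Y^n.
\]
Next I would introduce $\ww_*^n\in X^n_h$, the $E^n$-best approximant of $\uu(t_n)$. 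Because the $E^n$-inner product coincides with $\llangle\Theta^n\cdot,\Theta^n\cdot\rrangle_{Y^n}$ and $Y^n_h=\Theta^n X^n_h$, the orthogonality defining $\ww_*^n$ reads $b_e^n(\uu(t_n)-\ww_*^n,\vv)=0$ for all $\vv\in Y^n_h$. Subtracting~\eqref{DPGa} from the consistency identity then yields
\[
   b_e^n(\ww_*^n-\uu_h^n,\vv)=\tfrac{1}{k_n}\bigl(u(t_{n-1})-u_h^{n-1}+\tau^n,\,v\bigr)\qquad\forall\vv\in Y^n_h,
\]
and testing with the admissible choice $\vv=\Theta^n(\ww_*^n-\uu_h^n)\in Y^n_h$, while using $\|v\|/k_n\le\|\vv\|_{Y^n}$ built into the test norm, produces the step-wise bound $\|\ww_*^n-\uu_h^n\|_{E^n}\le\|u(t_{n-1})-u_h^{n-1}\|+\|\tau^n\|$. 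A triangle inequality then gives
\[
   \|\uu(t_n)-\uu_h^n\|_{E^n}\le\min_{\ww\in X^n_h}\|\uu(t_n)-\ww\|_{E^n}+\|u(t_{n-1})-u_h^{n-1}\|+\|\tau^n\|.
\]

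To close the induction I need the $L^2$-error $\|u(t_n)-u_h^n\|$ to obey the same step-wise bound with multiplicative constant one. I would establish this from the splitting $u(t_n)-u_h^n=(u(t_n)-u_*^n)+(u_*^n-u_h^n)$, where $\uu_*^n$ denotes the semi-discrete continuation starting from $u_h^{n-1}$: Proposition~\ref{prop_stab} applied to the error $\uu(t_n)-\uu_*^n$ controls $\|u(t_n)-u_*^n\|$ by $\|u(t_{n-1})-u_h^{n-1}\|+\|\tau^n\|$, and a parallel argument --- again based on the best-approximation identity and the lower norm equivalence from Section~\ref{sec_anal} --- bounds $\|u_*^n-u_h^n\|$ by $\min_\ww\|\uu(t_n)-\ww\|_{E^n}$. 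Iterating the resulting recursion from $e_0=\|u_0-u_h^0\|$ and substituting into the previous display yields the first assertion.

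The second assertion is immediate from the lower norm equivalence $\|\uu\|_{X^n_1}\le\sqrt 2\max\{1,\sqrt{4C_\mathrm{PF}^2+6k_n}\}\|\uu\|_{E^n}$ applied to $\uu(t_n)-\uu_h^n$, and the third from the upper norm equivalence $\|\uu\|_{E^n}\le\sqrt 3\|\uu\|_{X^n_2}$, which is a careful term-by-term Cauchy--Schwarz estimate of $b_e^n(\uu,\vv)$ against the weights built into $\|\vv\|_{Y^n}^2$. The hardest point of the plan is the sharp $L^2$ step-wise stability with multiplicative constant exactly one: any larger constant would propagate exponentially with $n$ and destroy the purely additive accumulation over time steps --- this is precisely the ``stability with constant one'' phenomenon singled out as the central difficulty of time-stepping Petrov--Galerkin schemes in the introduction.
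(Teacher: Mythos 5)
Your main line is the paper's own argument: the consistency identity $b_e^n(\uu(t_n),\vv)=L_e^n(u(t_{n-1});\vv)+k_n^{-1}(\tau^n,v)$, the observation that the $E^n$-best approximant $\ww_*^n$ of $\uu(t_n)$ coincides with the Galerkin projection onto $X^n_h$ tested against $Y^n_h=\Theta^nX^n_h$ (the paper calls it $\tilde\uu^n_h$), the error equation tested with $\Theta^n(\ww_*^n-\uu_h^n)$ combined with $k_n^{-1}\|v\|\le\|\vv\|_{Y^n}$, and the final triangle inequality all appear verbatim in \S\ref{sec_pf_Cea}. The second and third assertions via the norm equivalences of Lemma~\ref{la_norms} are likewise handled as in the paper.

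The one step that does not work as written is how you close the $L^2$ recursion. You route it through the semi-discrete continuation $\uu_*^n$ and claim $\|u_*^n-u_h^n\|\le\min_{\ww}\|\uu(t_n)-\ww\|_{E^n}$. What the best-approximation property actually gives is $\|u_*^n-u_h^n\|\le\|\uu_*^n-\uu_h^n\|_{E^n}=\min_{\ww}\|\uu_*^n-\ww\|_{E^n}$, i.e.\ the best approximation of $\uu_*^n$, not of $\uu(t_n)$. Converting one into the other by a triangle inequality costs an extra $\|\uu(t_n)-\uu_*^n\|_{E^n}\le\|u(t_{n-1})-u_h^{n-1}\|+\|\tau^n\|$, so the recursion picks up a factor $2$ on the propagated error and iterates to $2^n\|u_0-u_h^0\|$ --- exactly the accumulation catastrophe you yourself identify as the crux. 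The repair requires no new ingredient and no detour through $\uu_*^n$: you already proved $\|\ww_*^n-\uu_h^n\|_{E^n}\le\|u(t_{n-1})-u_h^{n-1}\|+\|\tau^n\|$, and the lower bound $\|u\|^2+k_n\|\ssigma\|^2\le\|\uu\|_{E^n}^2$ from Lemma~\ref{la_norms}, applied to $\ww_*^n-\uu_h^n$, controls its first component $w_*^n-u_h^n$ in $L^2$ by the same right-hand side; combined with $\|u(t_n)-w_*^n\|\le\|\uu(t_n)-\ww_*^n\|_{E^n}=\min_{\ww}\|\uu(t_n)-\ww\|_{E^n}$ this yields
\[
   \|u(t_n)-u_h^n\|
   \le
   \min_{\ww\in X^n_h}\|\uu(t_n)-\ww\|_{E^n}+\|u(t_{n-1})-u_h^{n-1}\|+\|\tau^n\|
\]
with multiplicative constant one, which is the recursion the paper iterates.
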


A proof of this result will be given in Section~\ref{sec_pf_Cea}.
Applying standard approximation results one obtains convergence estimates.
In the case of smooth solution, quasi-uniform meshes in space (fixed in time),
and equal time steps, one obtains the following result.

\begin{cor} \label{cor_conv}
Assume that $u_0$ and $f$ are sufficiently smooth so that the solution\\
$u\in L^\infty(0,T;H^3(\Omega))\cap W^{2,1}(0,T;L^2(\Omega))$.
Furthermore, let $\cT_n=\cT$ ($n=0,\ldots,N$) be quasi-uniform triangular meshes
with skeleton $\cS$ and mesh size $h$, and let $k_n=k$ ($n=1,\ldots,N$) be constant time steps.

(i) Taking lowest order spaces $X^n_h$ ($\cT$-piecewise constants for $u$ and $\ssigma$,
$\cS$-piecewise linears for $\hat u$, and $\cS$-piecewise constants for $\hat\sigma$),
there holds
\[
   \|u(T)-u^N_h\| + k^{1/2}\|\nabla u(T)-\ssigma^N_h\|
   =
   O(h/k) + O(k),
\]
that is, e.g., for $k=O(h^{1/2})$,
\[
   \|u(T)-u^N_h\|=O(h^{1/2})=O(k), \qquad \|\nabla u(T)-\ssigma^N_h\|=O(h^{1/4})=O(k^{1/2}).
\]
(ii) Taking lowest order spaces for the $\ssigma$, $\hat u$ and $\hat\sigma$-components of $X^n_h$,
and piecewise linears for the $u$ component of $X^n_h$, there holds
\[
   \|u(T)-u^N_h\| + k^{1/2}\|\nabla u(T)-\ssigma^N_h\|
   = O(h/k^{1/2}) + O(k)
   \quad\text{if}\quad h\le O(k^{1/2}),
\]
that is, e.g., for $k=O(h^{2/3})$,
\[
   \|u(T)-u^N_h\|=O(h^{2/3})=O(k),\qquad \|\nabla u(T)-\ssigma^N_h\|=O(h^{1/3})=O(k^{1/2}).
\]
\end{cor}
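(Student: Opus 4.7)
The plan is to combine the abstract C\'ea-type estimate of Theorem~\ref{thm_Cea} with standard piecewise-polynomial approximation results. The key observation is that the multiplier $\sqrt{2}\max\{1,\sqrt{4C_\mathrm{PF}^2+6k_n}\}$ appearing in Theorem~\ref{thm_Cea} is bounded by an absolute constant once $k_n=k\le T$, so $\|\uu(T)-\uu^N_h\|_{X^N_1}$ is controlled, up to that constant, by the three quantities on the right-hand side of the Theorem~\ref{thm_Cea} estimate. Since $\|u(T)-u^N_h\|+k^{1/2}\|\nabla u(T)-\ssigma^N_h\|\le \sqrt{2}\,\|\uu(T)-\uu^N_h\|_{X^N_1}$ by the definition of $\|\cdot\|_{X^N_1}$, the task reduces to estimating those three terms.

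Two of them are routine. The time-consistency term is handled by Taylor's theorem with integral remainder, giving $\|u(t_j)-u(t_{j-1})-k\dot u(t_j)\|\le \int_{t_{j-1}}^{t_j}(t-t_{j-1})\|\ddot u(t)\|\,dt$, whose sum is $O(k)\cdot\|\ddot u\|_{L^1(0,T;L^2(\Omega))}=O(k)$ under $u\in W^{2,1}(0,T;L^2(\Omega))$. The initial error $\|u_0-u^0_h\|$ is the $L^2$-projection error onto the chosen discrete space, yielding $O(h)$ in case (i) and $O(h^2)$ in case (ii) from $u_0\in H^3(\Omega)$.

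The main work is the spatial best-approximation sum. Invoking $\min_{\ww\in X^j_h}\|\uu(t_j)-\ww\|_{E^j}\le\sqrt{3}\min_{\ww\in X^j_h}\|\uu(t_j)-\ww\|_{X^j_2}$ from Theorem~\ref{thm_Cea}, I construct an explicit candidate $\ww\in X^j_h$: the $L^2$-projection of $u(t_j)$ onto $\cT$-piecewise constants (resp.\ piecewise linears in case (ii)) for the $u$-component, the $L^2$-projection of $\nabla u(t_j)$ onto $\cT$-piecewise constants for $\ssigma$, the trace of a continuous piecewise linear Lagrange interpolant $\tilde u_h$ of $u(t_j)$ for $\hat u$, and the normal trace of the lowest-order Raviart-Thomas interpolant $\qq_h$ of $\nabla u(t_j)$ for $\hat\sigma$. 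Since $\tilde u_h\in H^1_0(\Omega)$ and $\qq_h\in\HH(\div,\Omega)$ are admissible in the infima defining \eqref{tracenorms},
\[
   \|\hat u(t_j)-\hat u_h\|^2_{1/2,\cS}
   \le \|u(t_j)-\tilde u_h\|^2 + k\|\nabla(u(t_j)-\tilde u_h)\|^2 = O(h^4)+O(kh^2),
\]
and analogously $k\|\hat\sigma(t_j)-\hat\sigma_h\|^2_{-1/2,\cS}=O(kh^2)+O(k^2h^2)$ using the RT0 commuting-diagram property. Combining with the standard $L^2$-projection estimates for the volume components, all contributions to $\|\uu(t_j)-\ww\|^2_{X^j_2}$ can be bounded. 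The $u$-volume term dominates in case (i), giving $\|\uu(t_j)-\ww\|_{X^j_2}=O(h)$; in case (ii) the piecewise linear $u$-approximation becomes $O(h^2)$ and, provided $h\le O(k^{1/2})$, the remaining $O(kh^2)$ contributions dominate, giving $O(h\sqrt{k})$. Summing over the $N=T/k$ time steps produces $O(h/k)$ in case (i) and $O(h/k^{1/2})$ in case (ii). Adding the $O(k)$ time-stepping error and the initial error, and balancing the two dominant contributions by choosing $k=O(h^{1/2})$ (case (i)) or $k=O(h^{2/3})$ (case (ii)), yields the stated convergence rates for $u(T)-u^N_h$ directly and for $\nabla u(T)-\ssigma^N_h$ after dividing by the $k^{1/2}$ weight.

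The most delicate part is the weighted trace-norm bookkeeping: the accumulation factor $N=T/k$ amplifies any missing spatial power, so one must verify via \eqref{tracenorms} that the Lagrange and Raviart-Thomas interpolants deliver exactly the combined powers of $h$ and $k$ claimed above under the regularity $u\in L^\infty(0,T;H^3(\Omega))$. Once these interpolation bounds are in place, the remaining arithmetic of balancing $O(h/k^\alpha)+O(k)$ is routine.
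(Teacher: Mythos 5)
Your proposal is correct and follows essentially the same route as the paper: Theorem~\ref{thm_Cea} plus the $X^n_2$ bound on the best energy-norm approximation, with the trace terms estimated via the weighted infima \eqref{tracenorms} using conforming (Lagrange and Raviart--Thomas) interpolants, and the factor $N=T/k$ accounting for the $O(h/k)$ and $O(h/k^{1/2})$ accumulation. The only cosmetic difference is that you pass through $\|\cdot\|_{X^N_1}$ to extract the field errors, whereas the paper can use the sharper intermediate bound $\|u\|^2+k_n\|\ssigma\|^2\le\|\uu\|_{E^n}^2$ of Lemma~\ref{la_norms} directly; both give the stated rates.
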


\begin{proof}
(i) There holds $\|u_0-u^0_h\|=O(h)$, $\|u(t_n)-u(t_{n-1}) - k_n\dot u(t_n)\|=O(k^2)$ so that
the first result follows from Theorem~\ref{thm_Cea} by showing that, with
$\ww=(w^u,\ww^\sigma,\hat w^u,\hat w^\sigma)$,
\begin{align*}
\lefteqn{
   \min_{\ww\in X^n_h} \|\uu(t_n)-\ww\|_{X^n_2}^2
   =
}\\
&
   \min_{\ww\in X^n_h}
   \Bigl(
      \|u(t_n)-w^u\|^2 + k \|\ssigma(t_n) - \ww^\sigma\|^2
    + \|\hat u - \hat w^u\|_{1/2,\cS}^2 + k \|\hat\sigma-\hat w^\sigma\|_{-1/2,\cS}^2
   \Bigr)
   =O(h^2).
\end{align*}
Indeed, the first two terms are of the orders $O(h^2)$ and $O(kh^2)$.
By the definition of the norm $\|\cdot\|_{1/2,\cS}$ as the trace of the square root of
$\|\cdot\|^2+k\|\nabla\cdot\|^2$, the third term is of the order $O(h^4)+O(kh^2) = O(h^2)$.
The norm $\|\cdot\|_{-1/2,\cS}$ is defined as the trace of the square root of
$\|\cdot\|^2+k\|\div\cdot\|^2$. Thus, the fourth term is of the order $O(kh^2)$.\\
(ii) From the first part we have seen that the $\hat u$-term, respectively
$\ssigma$ and $\hat\sigma$-terms, of the error (squared) are of the orders
$O(h^4)+O(kh^2)$ and $O(kh^2)$, respectively.
The $u$-term (squared) is now of the order $O(h^4)$. It follows that
\[
   \min_{\ww\in X^n_h} \|\uu(t_n)-\ww\|_{X^n_2} = O(h^2) + O(k^{1/2}h),
\]
and thus the result.
\end{proof}

%===================================================================================================
\section{Analysis} \label{sec_anal} \label{sec_3}

In this section we analyze our time-stepping DPG scheme, and finish with proving
the main results, Theorem~\ref{thm_stab} in \S\ref{sec_pf_stab} and
Theorem~\ref{thm_Cea} in \S\ref{sec_pf_Cea}. As a preparation, we show some
stability properties of adjoint problems in \S\ref{sec_adj} and, in \S\ref{sec_norms},
prove a pivotal norm equivalence in the ansatz spaces.

Let us start by recalling some basic facts of DPG analysis, see, e.g., \cite{DemkowiczG_11_CDP}.

As already mentioned, central to DPG analysis is the energy norm.
In our time-stepping case there is one for every time step $t_n$ ($n=1,\ldots,N$),
$\|\cdot\|_{E^n}$, see~\eqref{energy}.
Denoting by $B^n_e:\;X^n\to (Y^n)'$ the operator stemming from the bilinear form $b_e^n$, and
$\cR^n:\;Y^n\to (Y^n)'$ the Riesz operator, one finds the representation
$\Theta^n=(\cR^n)^{-1}B_e^n$ for the trial-to-test operator, cf.~\eqref{ttt}. Therefore,
\begin{equation} \label{energy_ttt}
   \|\uu\|_{E^n} = \|B_e^n\uu\|_{(Y^n)'} = \|\Theta^n\uu\|_{Y^n}
   \qquad\forall\uu\in X^n.
\end{equation}
We need the following useful relations for discrete functions which hold for
our choice of test spaces $Y^n_h=\Theta^n X^n_h$:
\begin{equation} \label{Enorm_discrete}
   \|\uu\|_{E^n} = b_e^n(\uu,\Theta^n\uu)^{1/2}
                 = \sup_{\vv\in Y^n_h} \frac {b_e^n(\uu,\vv)}{\|\vv\|_{Y^n}}
   \qquad\forall \uu\in X^n_h.
\end{equation}
The first equality is obtained by using \eqref{energy_ttt} and relation \eqref{ttt},
\[
   \|\uu\|_{E^n} = \llangle\Theta^n\uu,\Theta^n\uu\rrangle_{Y^n}^{1/2} = b_e^n(\uu,\Theta^n\uu)^{1/2}.
\]
The second equality in \eqref{Enorm_discrete} is obtained by construction of
$Y^n_h$, containing the optimal test function $\Theta^n\uu$ for $\uu\in X^n_h$.

%===================================================================================================
\subsection{Stability of the adjoint problem} \label{sec_adj}

Corresponding to each ultra-weak formulation at a given time step $t_n$, cf.~\eqref{semi_weak},
there is a primal problem (the reaction-diffusion problems \eqref{semi}) and
an adjoint problem. Therefore, our time-stepping scheme gives rise to $N$ adjoint problems
whose stability is essential for the stability and robustness of the DPG scheme, as we will
illustrate when proving our main results. As is standard in DPG analysis, we split the
adjoint problem(s) in a global one (with solution in continuous spaces) and a homogeneous
one (with solution in broken spaces). The corresponding results are established by
Lemmas~\ref{la_stab} and \ref{la_stab_hom}, respectively.

\begin{lemma} \label{la_stab}
Let $G\in L^2(\Omega)$ and $\FF\in\LL^2(\Omega)$ be given.
There exists a unique element $\vv=(v,\ttau)\in H^1_0(\Omega)\times\HH(\div,\Omega)$ such that
\begin{subequations} \label{stab}
\begin{alignat}{2}
   \frac 1{k_n}v + \div\ttau &= G   &\qquad& \text{in}\quad\Omega, \label{staba}\\
   \nabla v + \ttau          &= \FF &&       \text{in}\quad\Omega, \label{stabb}
\end{alignat}
\end{subequations}
and there holds
\[
   \|\vv\|_{Y^n}^2 = \|G\|^2 + \frac 1{k_n} \|\FF\|^2.
\]
\end{lemma}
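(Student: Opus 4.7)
The plan is to view \eqref{stab} as an equivalent scalar reaction-diffusion problem, and then to establish the norm identity by an integration-by-parts cancellation of cross terms.

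First I would eliminate $\ttau$ using \eqref{stabb}, setting $\ttau=\FF-\nabla v$, and substitute into \eqref{staba} to obtain the scalar problem
\[
   \tfrac{1}{k_n} v - \Delta v = G - \div\FF \quad\text{in } H^{-1}(\Omega),
   \qquad v\in H^1_0(\Omega).
\]
Since the bilinear form $\tfrac{1}{k_n}(v,w)+(\nabla v,\nabla w)$ is continuous and coercive on $H^1_0(\Omega)$, and the right-hand side is a bounded linear functional on $H^1_0(\Omega)$ (note that $\div\FF$ is taken in the distributional sense and paired against $w\in H^1_0(\Omega)$ via $-(\FF,\nabla w)$), Lax--Milgram yields a unique $v\in H^1_0(\Omega)$. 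Defining $\ttau:=\FF-\nabla v\in\LL^2(\Omega)$, relation \eqref{staba} forces $\div\ttau=G-\tfrac{1}{k_n}v\in L^2(\Omega)$, so $\ttau\in\HH(\div,\Omega)$ and $(v,\ttau)$ solves \eqref{stab}. Uniqueness of the pair follows from uniqueness of $v$ together with the definition of $\ttau$.

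For the norm identity, I would simply expand
\[
   \|G\|^2 + \tfrac{1}{k_n}\|\FF\|^2
   = \bigl\|\tfrac{1}{k_n}v + \div\ttau\bigr\|^2
     + \tfrac{1}{k_n}\|\nabla v + \ttau\|^2.
\]
Expanding the squares produces the four ``diagonal'' terms that are exactly $\|\vv\|_{Y^n}^2$, plus the cross terms
\[
   \tfrac{2}{k_n}(v,\div\ttau) + \tfrac{2}{k_n}(\nabla v,\ttau).
\]
The key (and only nontrivial) step is to observe that since $v\in H^1_0(\Omega)$ and $\ttau\in\HH(\div,\Omega)$, the standard integration-by-parts formula gives $(v,\div\ttau)=-(\nabla v,\ttau)$, so the two cross terms cancel exactly. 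This yields the claimed identity $\|\vv\|_{Y^n}^2=\|G\|^2+\tfrac{1}{k_n}\|\FF\|^2$.

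The main obstacle, to the extent there is one, is recognizing that the two squared norms in $G$ and $\FF$ are engineered so that their cross terms annihilate under the global integration-by-parts; everything else is routine Lax--Milgram and algebra. There is no difficulty with broken spaces here because the system is posed globally on $\Omega$, and the homogeneous Dirichlet condition on $v$ is exactly what is needed to make the boundary term in the integration by parts vanish.
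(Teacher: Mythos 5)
Your proof is correct and follows essentially the same route as the paper: eliminate $\ttau$ to get the scalar reaction--diffusion problem $\tfrac{1}{k_n}v-\Delta v = G-\div\FF$ in $H^{-1}(\Omega)$, recover $\ttau\in\HH(\div,\Omega)$ from \eqref{stabb}, and obtain the norm identity by expanding the two squares and cancelling the cross terms via $(v,\div\ttau)+(\nabla v,\ttau)=0$ for $v\in H^1_0(\Omega)$, $\ttau\in\HH(\div,\Omega)$. No issues.
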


\begin{proof}
Eliminating $\ttau$ from \eqref{stab} gives
\begin{align} \label{pf_stab_1}
   -\Delta v + \frac 1{k_n}v = G - \div\FF \qquad\text{in}\quad  H^{-1}(\Omega),
\end{align}
with unique solution $v\in H^1_0(\Omega)$.
Equation~\eqref{stabb} then uniquely defines $\ttau\in\LL^2(\Omega)$, and by \eqref{pf_stab_1},
$\div\ttau=\div\FF-\Delta v=G-k_n^{-1}v$, that is, $\ttau\in\HH(\div,\Omega)$
and it satisfies \eqref{staba}.

Now, using both equations \eqref{staba} and \eqref{stabb},
\begin{align*}
\lefteqn{
   \|G\|^2 + \frac 1{k_n} \|\FF\|^2
   =
   \|k_n^{-1}v+\div\ttau\|^2 + \frac 1{k_n} \|\nabla v+\ttau\|^2
}\\
   &=
   \frac 1{k_n^2} \|v\|^2 + \|\div\ttau\|^2 + \frac 2{k_n} (v,\div\ttau)
   +
   \frac 1{k_n} \|\nabla v\|^2 + \frac 1{k_n}\|\ttau\|^2 + \frac 2{k_n} (\nabla v,\ttau)
   =
   \|\vv\|_{Y^n}^2
\end{align*}
since $(v,\div\ttau)+(\nabla v,\ttau)=0$.
\end{proof}

\begin{lemma}\label{la_stab_hom}
If $\vv=(v,\ttau)\in Y^n=H^1(\cT_n)\times\HH(\div, \cT_n)$ satisfies
\begin{subequations} \label{stabh}
\begin{alignat}{2}
   \frac 1{k_n} v  + \div\ttau &= 0  &\qquad& \text{in}\quad K, \label{stabha}\\
   \nabla v + \ttau            &= 0  &&       \text{in}\quad K, \label{stabhb}
\end{alignat}
\end{subequations}
for any $K\in\cT_n$, then
\begin{align}
   \frac 1{k_n^2} \|v\|^2 = \|\div\ttau\|_{\cT_n}^2
   &\le
   \|\jump{\ttau\cdot\nn}\|_{-1/2,\cS_n'}^2
   + \frac 1{k_n} \|\jump{v}\|_{1/2,\cS_n'}^2, \label{stabhc}
   \\
   \|\nabla v\|_{\cT_n}^2 = \|\ttau\|^2
   &\le
   4(C_\mathrm{PF}^2+k_n) \Bigl( \|\jump{\ttau\cdot\nn}\|_{-1/2,\cS_n'}^2
   + \frac 1{k_n} \|\jump{v}\|_{1/2,\cS_n'}^2 \Bigr) \label{stabhd}
\end{align}
that is,
\begin{align*}
   \|\vv\|_{Y^n}^2
   \le
   \frac1{k_n} (4C_\mathrm{PF}^2 + 6 k_n)
   \Bigl( \|\jump{\ttau\cdot\nn}\|_{-1/2,\cS_n'}^2
        + \frac 1{k_n} \|\jump{v}\|_{1/2,\cS_n'}^2 \Bigr)
\end{align*}
for $n=1,\ldots,N$.
\end{lemma}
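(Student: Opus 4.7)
The first equalities in \eqref{stabhc} and \eqref{stabhd} are immediate: \eqref{stabhb} gives $\ttau=-\nabla v$ elementwise, so $\|\nabla v\|_{\cT_n}^2=\|\ttau\|^2$; and \eqref{stabha} gives $\div\ttau=-v/k_n$ elementwise, so $\|\div\ttau\|_{\cT_n}^2=\|v\|^2/k_n^2$. The content is in the two inequalities, and I plan to prove both by a duality argument based on Lemma~\ref{la_stab}: test the broken pair $(v,\ttau)$ against solutions of the global adjoint problem and let the jumps be exposed through element-wise integration by parts.

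For \eqref{stabhc} I would start from $\|v\|=\sup_{G\in L^2(\Omega)}(v,G)/\|G\|$. For fixed $G$, Lemma~\ref{la_stab} with data $(G,0)$ yields a conforming pair $(\tilde v,\tilde\ttau)\in H^1_0(\Omega)\times\HH(\div,\Omega)$ satisfying $\tilde v/k_n+\div\tilde\ttau=G$, $\nabla\tilde v+\tilde\ttau=0$, and $\|(\tilde v,\tilde\ttau)\|_{Y^n}^2=\|G\|^2$. Expanding $(v,G)=(v,\tilde v)/k_n+(v,\div\tilde\ttau)$, integrating the second term by parts elementwise, substituting $\nabla v=-\ttau$, and integrating by parts once more in the resulting term $(\ttau,\nabla\tilde v)_{\cT_n}$ using $\div\ttau=-v/k_n$, the two copies of $(v,\tilde v)/k_n$ cancel and one is left with
\[
   (v,G) = -\<\jump{\ttau\cdot\nn},\hat{\tilde v}\>_{\cS_n} + \<\jump v,\hat{\tilde\ttau\cdot\nn}\>_{\cS_n}.
\]
These pairings are well defined because $\tilde v\in H^1_0(\Omega)$ yields $\hat{\tilde v}\in H^{1/2}_{00}(\cS_n)$ and $\tilde\ttau\in\HH(\div,\Omega)$ yields a single-valued normal trace in $H^{-1/2}(\cS_n)$. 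A weighted Cauchy-Schwarz, together with the trace-norm inequalities coming from \eqref{tracenorms} and the $Y^n$-norm identity $\|(\tilde v,\tilde\ttau)\|_{Y^n}^2=\|G\|^2$ (which, after multiplication by $k_n^2$, bounds $\|\tilde v\|^2+k_n\|\nabla\tilde v\|^2+k_n\|\tilde\ttau\|^2+k_n^2\|\div\tilde\ttau\|^2$ by $k_n^2\|G\|^2$), then delivers \eqref{stabhc}.

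For \eqref{stabhd} the same recipe applies to $\|\ttau\|=\sup_{\FF\in\LL^2(\Omega)}(\ttau,\FF)/\|\FF\|$, invoking Lemma~\ref{la_stab} now with data $(0,\FF)$. A symmetric computation produces $(\ttau,\FF)=\<\jump{\ttau\cdot\nn},\hat{\tilde v}\>_{\cS_n}-\<\jump v,\hat{\tilde\ttau\cdot\nn}\>_{\cS_n}$, and combining with the new identity $\|(\tilde v,\tilde\ttau)\|_{Y^n}^2=\|\FF\|^2/k_n$ produces a bound on $\|\ttau\|^2$ by the right-hand side of \eqref{stabhd}; the stated $4(C_\mathrm{PF}^2+k_n)$ factor appears when, instead of using the full $Y^n$-norm identity on the conforming side, one replaces $\|\tilde v\|^2$ by $C_\mathrm{PF}^2\|\nabla\tilde v\|^2$ via the Poincar\'e-Friedrichs inequality. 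The concluding $\|\vv\|_{Y^n}^2$ bound is then algebra: from the first-paragraph equalities one has $\|\vv\|_{Y^n}^2=2\|v\|^2/k_n^2+2\|\ttau\|^2/k_n$, into which one substitutes \eqref{stabhc} and \eqref{stabhd}.

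I expect the main obstacle to be the bookkeeping in the two integrations by parts: because $v$ and $\ttau$ are broken but $(\tilde v,\tilde\ttau)$ is globally conforming, the element-boundary sums $\sum_K\<v,\tilde\ttau\cdot\nn_K\>_{\partial K}$ and $\sum_K\<\ttau\cdot\nn_K,\tilde v\>_{\partial K}$ must each collapse onto exactly one jump duality pairing, and one has to verify that the surviving terms recombine correctly and that the weights chosen in Cauchy-Schwarz mesh with the $k_n$-weighted structure of the $Y^n$ norm.
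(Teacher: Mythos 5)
Your argument is correct, and for \eqref{stabhd} it takes a genuinely different route from the paper. For \eqref{stabhc} the two proofs essentially coincide: the paper does not run the supremum over all $G$ but directly picks the optimal datum $G=v/k_n$ (its auxiliary $w$ solving $-k_n\Delta w+w=v$ is exactly your $\tilde v$ from Lemma~\ref{la_stab} with data $(v/k_n,0)$), performs the same two element-wise integrations by parts, and uses the same combined trace-norm bound $\|w\|_{1/2,\cS_n}^2+k_n\|\nn\cdot\nabla w\|_{-1/2,\cS_n}^2\le\|v\|^2$ before the weighted Cauchy--Schwarz; your bookkeeping concern resolves exactly as you anticipate, with the volume terms cancelling and each boundary sum collapsing onto one jump pairing. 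For \eqref{stabhd}, however, the paper abandons duality and instead uses a Helmholtz decomposition $\ttau=\nabla\psi+\nabla\times\zz$ following \cite{DemkowiczG_11_ADM}; there the volume term $\frac1{k_n}(v,\psi)$ does \emph{not} cancel and is absorbed via Poincar\'e--Friedrichs, which is the actual source of the constant $4(C_\mathrm{PF}^2+k_n)$ --- not the mechanism you guess in your parenthetical remark. Your duality argument with data $(0,\FF)$ exploits the exact adjoint cancellation of $\frac1{k_n}(v,\tilde v)$ and, carried through with the identity $\|(\tilde v,\tilde\ttau)\|_{Y^n}^2=\|\FF\|^2/k_n$, yields the sharper bound
\begin{align*}
   \|\ttau\|^2
   \le
   k_n\Bigl(\|\jump{\ttau\cdot\nn}\|_{-1/2,\cS_n'}^2+\frac1{k_n}\|\jump{v}\|_{1/2,\cS_n'}^2\Bigr),
\end{align*}
with no $C_\mathrm{PF}$ at all; this implies \eqref{stabhd} and would even improve the constant $C_n$ in Lemma~\ref{la_norms}. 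One point to tighten: the concluding estimate for $\|\vv\|_{Y^n}^2$ does \emph{not} follow by substituting \eqref{stabhc} and \eqref{stabhd} as stated into $\|\vv\|_{Y^n}^2=2\|v\|^2/k_n^2+2\|\ttau\|^2/k_n$ --- that substitution gives the factor $(8C_\mathrm{PF}^2+10k_n)/k_n$ rather than $(4C_\mathrm{PF}^2+6k_n)/k_n$ --- so you should derive it from your sharper intermediate bound, for which $2+2=4\le(4C_\mathrm{PF}^2+6k_n)/k_n$ settles the matter immediately.
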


\begin{proof}
To bound $\|v\|$ we use the technique from \cite[proof of Lemma 8]{HeuerK_RDM},
but are careful with constants.
We define $w\in H^1_0(\Omega)$ as the solution of $-k_n\Delta w + w = v$. Then
\(
   \|w\|^2 + k_n \|\nabla w\|^2  = (v,w)
\)
and, by our definition of trace norms \eqref{tracenorms},
$\|w\|_{1/2,\cS_n}^2 \le (v,w)$ and
\begin{align*}
   k_n \|\nn\cdot\nabla w\|_{-1/2,\cS_n}^2
   &\le
   k_n \|\nabla w\|^2 + k_n^2 \|\Delta w\|^2\\
   &=
   k_n \|\nabla w\|^2 + \|w\|^2 + \|v\|^2 - 2(v,w)
   =
   \|v\|^2 - (v,w).
\end{align*}
Therefore,
\[
   \|w\|_{1/2,\cS_n}^2 + k_n \|\nn\cdot\nabla w\|_{-1/2,\cS_n}^2 \le \|v\|^2.
\]
Now, using the equation for $w$, integrating twice piecewise by parts, and
considering the bound above for the trace norms of $w$, we obtain
\begin{align*}
   \frac 1{k_n} \|v\|^2
   =
   \frac 1{k_n} (v,w) - (v,\Delta w)
   &=
   (\frac 1{k_n}v - \Delta v, w)_{\cT_n}
   + \<\jump{\nn\cdot\nabla v},w\>_{\cS_n} - \<\jump{v},\nn\cdot\nabla w\>_{\cS_n}\\
   &=
   \<\jump{\nn\cdot\nabla v},w\>_{\cS_n} - \<\jump{v}, \nn\cdot\nabla w\>_{\cS_n}\\
   &\le
   \Bigl(\|\jump{\nn\cdot\nabla v}\|_{-1/2,\cS_n'}^2
         +
         \frac 1{k_n}\|\jump{v}\|_{1/2,\cS_n'}^2
   \Bigr)^{1/2}
   \|v\|.
\end{align*}
Since $\jump{\nn\cdot\nabla v}=-\jump{\nn\cdot\ttau}$ by \eqref{stabhb}, this yields
\[
   \frac 1{k_n^2} \|v\|^2
   \le
   \|\jump{\ttau\cdot\nn}\|_{-1/2,\cS_n'}^2 + \frac 1{k_n}\|\jump{v}\|_{1/2,\cS_n'}^2.
\]
By \eqref{stabha}, this proves \eqref{stabhc}.

Now, to show \eqref{stabhd}, we follow \cite[proof of Lemma 4.4]{DemkowiczG_11_ADM}
by considering, in three dimensions, the Helmholtz decomposition
\(
   \ttau = \nabla\psi + \nabla\times\zz
\)
with $\psi \in H^1_0(\Omega)$ and $\zz\in\HH(\curl,\Omega)$, so that
$\|\nabla\psi\| \le \|\ttau\|$ and $\|\nabla\times\zz\| \le \|\ttau\|$.
Then, integrating by parts piecewise, one obtains
\begin{align*}
   \|\ttau\|^2
   &=
   (\ttau, \nabla\psi + \nabla\times\zz)
   =
   (\ttau, \nabla\psi) - (\nabla v,\nabla\times\zz)_{\cT_n}\\
   &=
   \frac 1{k_n} (v,\psi)
   + \<\jump{\ttau\cdot\nn},\psi\>_{\cS_n}
   - \<\jump{v}, \nn\cdot(\nabla\times\zz)\>_{\cS_n}\\
   &\le
   \frac {C_\mathrm{PF}}{k_n} \|v\| \| \ttau\|
   + \|\jump{\ttau\cdot\nn}\|_{-1/2,\cS_n'} \|\psi\|_{1/2,\cS_n}
   + \|\jump{v}\|_{1/2,\cS_n'}  \|\nn\cdot(\nabla\times\zz)\|_{-1/2,\cS_n}.
\end{align*}
By the definition of trace norms we have
\begin{align*}
   &\|\psi\|_{1/2,\cS_n}^2
   \le
   \|\psi\|^2 + k_n \|\nabla\psi\|^2
   \le
   (C_\mathrm{PF}^2+k_n) \|\ttau\|^2
   \qquad\text{and}\\
   &\|\nn\cdot(\nabla\times\zz)\|_{-1/2,\cS_n}
   \le
   \|\nabla\times\zz\| \le \|\ttau\|.
\end{align*}
Therefore, together with \eqref{stabhc}, we conclude that
\begin{align*}
  \|\ttau\| &\leq C_\mathrm{PF} \Bigl( \|\jump{\ttau\cdot\nn}\|_{-1/2,\cS_n'}^2 + \frac 1{k_n}
  \|\jump{v}\|_{1/2,\cS_n'}^2 \Bigr)^{1/2} \\
  &\qquad + \Bigl(C_\mathrm{PF}^2 + k_n\Bigr)^{1/2} \|\jump{\ttau\cdot\nn}\|_{-1/2,\cS_n'} + 
  k_n^{1/2} \frac1{k_n^{1/2}} \|\jump{v}\|_{1/2,\cS_n'} \\
  &\leq C_\mathrm{PF} \Bigl( \|\jump{\ttau\cdot\nn}\|_{-1/2,\cS_n'}^2 + \frac 1{k_n} 
  \|\jump{v}\|_{1/2,\cS_n'}^2 \Bigr)^{1/2} \\
  &\qquad + \Bigl( C_\mathrm{PF}^2 + 2k_n\Bigr)^{1/2} 
  \Bigl( \|\jump{\ttau\cdot\nn}\|_{-1/2,\cS_n'}^2 + \frac 1{k_n} 
  \|\jump{v}\|_{1/2,\cS_n'}^2 \Bigr)^{1/2} \\
  &= \Bigl(C_\mathrm{PF} + \bigl(C_\mathrm{PF}^2+2k_n\bigr)^{1/2} \Bigr)
  \Bigl( \|\jump{\ttau\cdot\nn}\|_{-1/2,\cS_n'}^2 + \frac 1{k_n} 
  \|\jump{v}\|_{1/2,\cS_n'}^2 \Bigr)^{1/2} \\
  &\leq 2\Bigl(C_\mathrm{PF}^2+k_n\Bigr)^{1/2}
  \Bigl( \|\jump{\ttau\cdot\nn}\|_{-1/2,\cS_n'}^2 + \frac 1{k_n} 
  \|\jump{v}\|_{1/2,\cS_n'}^2 \Bigr)^{1/2}.
\end{align*}
This is \eqref{stabhd} for $\ttau$ and the relation for $\nabla v$ holds by \eqref{stabhb}.
In two space dimensions one uses the Helmholtz decomposition
\(
   \ttau = \nabla\psi + (-\partial_2 z, \partial_1 z)
\)
and proceeds as before.
\end{proof}

%===================================================================================================
\subsection{Norm equivalences in $X^n$} \label{sec_norms}

The DPG method delivers best approximations in energy norms.
Estimates in other norms require to bound the energy norms appropriately.
This is provided by Lemma~\ref{la_norms} below.

For its proof we need to introduce so-called \emph{optimal test norms}.
They are dual to $X^n$-norms with respect to the extended bilinear form.
Specifically, we define
\begin{align} \label{testnorm_opt}
   \|\vv\|_{Y^n,\mathrm{opt}}
   &:=
   \sup_{\uu\in X^n} \frac {b_e^n(\uu,\vv)}{\|\uu\|_{X^n_1}},
   \qquad \vv\in Y^n.
\end{align}
By the well-posedness of our semi-discrete formulation \eqref{semi_weak},
the operator $B^n_e:\;X^n\to (Y^n)'$ stemming from the extended bilinear form
is an isomorphism. Thus, there holds
\begin{align} \label{norm_dual}
   \|\uu\|_{X^n_1}
   =
   \sup_{\vv\in Y^n} \frac {b_e^n(\uu,\vv)}{\|\vv\|_{Y^n,\mathrm{opt}}},
   \qquad \uu\in X^n,
\end{align}
see \cite[eq. (2.13)]{ZitelliMDGPC_11_CDP}.

\begin{lemma} \label{la_norms}
There hold the three bounds
\[
   \|\uu\|_{X^n_1} \le C_n \|\uu\|_{E^n},\quad
   \|u\|^2 + k_n\|\ssigma\|^2 \le \|\uu\|_{E^n}^2,\quad\text{and}\quad
   \|\uu\|_{E^n} \le \sqrt{3} \|\uu\|_{X^n_2}
\]
for any $\uu=(u,\ssigma,\hat u,\hat\sigma)\in X^n$, $n=1,\ldots,N$.
Here, $C_n=\sqrt{2}\max\Bigl\{1,\sqrt{4C_\mathrm{PF}^2+6k_n}\Bigr\}$.
\end{lemma}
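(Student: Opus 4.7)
The three bounds are handled separately, with the lower bound on $\|\uu\|_{X^n_1}$ being the technical core and the other two being relatively short.

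For the upper bound $\|\uu\|_{E^n}\le\sqrt{3}\|\uu\|_{X^n_2}$, I would group the bilinear form as
\[
 b_e^n(\uu,\vv)=(u,\,v/k_n+\div\ttau)_{\cT_n}+(\ssigma,\,\nabla v+\ttau)_{\cT_n}
 -\<\hat u,\jump{\ttau\cdot\nn}\>_{\cS_n}-\<\hat\sigma,\jump{v}\>_{\cS_n}
\]
and apply Cauchy--Schwarz coordinate-by-coordinate with weights $1,\,k_n^{1/2},\,1,\,k_n^{1/2}$ on the $\uu$-factors. The resulting product isolates $\|\uu\|_{X^n_2}$ on the trial side and, on the test side, produces $\|v/k_n+\div\ttau\|^2+k_n^{-1}\|\nabla v+\ttau\|^2+\|\jump{\ttau\cdot\nn}\|_{-1/2,\cS_n'}^2+k_n^{-1}\|\jump{v}\|_{1/2,\cS_n'}^2$. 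Expanding the two squared binomials gives a factor~$2$, and piecewise integration by parts (testing the jump functionals against admissible extensions and using \eqref{tracenorms}) gives $\|\jump{\ttau\cdot\nn}\|_{-1/2,\cS_n'}^2\le k_n^{-1}\|\ttau\|^2+\|\div\ttau\|_{\cT_n}^2$ and $\|\jump{v}\|_{1/2,\cS_n'}^2\le k_n^{-1}\|v\|^2+\|\nabla v\|_{\cT_n}^2$. Summing all four contributions yields exactly $3\|\vv\|_{Y^n}^2$.

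For the partial lower bound $\|u\|^2+k_n\|\ssigma\|^2\le\|\uu\|_{E^n}^2$, I would make a Fortin-type choice of test: apply Lemma~\ref{la_stab} with $G=u/k_n$ and $\FF=\ssigma$ to obtain a conforming $\vv=(v,\ttau)\in H^1_0(\Omega)\times\HH(\div,\Omega)$ satisfying $v/k_n+\div\ttau=u/k_n$ and $\nabla v+\ttau=\ssigma$, with $\|\vv\|_{Y^n}^2=k_n^{-2}\|u\|^2+k_n^{-1}\|\ssigma\|^2$. Because $\vv$ is conforming the two jump terms in $b_e^n$ vanish, so $b_e^n(\uu,\vv)=k_n^{-1}\|u\|^2+\|\ssigma\|^2$. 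Dividing the square of this by $\|\vv\|_{Y^n}^2$ collapses algebraically to $\|u\|^2+k_n\|\ssigma\|^2$, which is the claimed bound via \eqref{energy}.

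The main bound $\|\uu\|_{X^n_1}\le C_n\|\uu\|_{E^n}$ is where I expect the work to lie, and I would derive it from its dual formulation \eqref{norm_dual}: it suffices to prove $\|\vv\|_{Y^n}\le C_n\|\vv\|_{Y^n,\mathrm{opt}}$ for every $\vv\in Y^n$. Given $\vv$, I would split it as $\vv=\vv_0+\vv_h$, where $\vv_0\in H^1_0(\Omega)\times\HH(\div,\Omega)$ is the Lemma~\ref{la_stab} solution with data $G=v/k_n+\div\ttau$, $\FF=\nabla v+\ttau$, so that $\vv_h$ satisfies the homogeneous system~\eqref{stabh} elementwise and has the same jumps as $\vv$; Lemma~\ref{la_stab_hom} then controls $\|\vv_h\|_{Y^n}^2$ by $k_n^{-1}(4C_\mathrm{PF}^2+6k_n)\bigl(\|\jump{\ttau\cdot\nn}\|_{-1/2,\cS_n'}^2+k_n^{-1}\|\jump{v}\|_{1/2,\cS_n'}^2\bigr)$, while Lemma~\ref{la_stab} gives $\|\vv_0\|_{Y^n}^2=\|G\|^2+k_n^{-1}\|\FF\|^2$.

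To realize the desired inf-sup, I would construct a trial function $\uu$ tailored to $\vv$ by taking $u=G$, $\ssigma=\FF/k_n$, $\hat u=-\phi/k_n$, $\hat\sigma=-\psi/k_n^2$, where $\phi\in H^{1/2}_{00}(\cS_n)$ and $\psi\in H^{-1/2}(\cS_n)$ are (Hilbert-space) Riesz normalizers of $\jump{\ttau\cdot\nn}$ and $\jump{v}$, i.e.\ $\<\phi,\jump{\ttau\cdot\nn}\>_{\cS_n}=\|\phi\|_{1/2,\cS_n}^2=\|\jump{\ttau\cdot\nn}\|_{-1/2,\cS_n'}^2$ and analogously for $\psi$. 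Setting $A=\|G\|^2$, $B=\|\FF\|^2$, $C=\|\jump{\ttau\cdot\nn}\|_{-1/2,\cS_n'}^2$, $D=\|\jump{v}\|_{1/2,\cS_n'}^2$, the scalings are chosen so that simultaneously
\[
 b_e^n(\uu,\vv)=A+B/k_n+C/k_n+D/k_n^2=\|\uu\|_{X^n_1}^2,
\]
while the decomposition bound rearranges to $\|\vv\|_{Y^n}^2\le 2(A+B/k_n)+(8C_\mathrm{PF}^2+12k_n)(C/k_n+D/k_n^2)\le\max\{2,8C_\mathrm{PF}^2+12k_n\}\,\|\uu\|_{X^n_1}^2=C_n^2\|\uu\|_{X^n_1}^2$. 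Combining gives $b_e^n(\uu,\vv)/\|\uu\|_{X^n_1}\ge\|\vv\|_{Y^n}/C_n$, hence $\|\vv\|_{Y^n,\mathrm{opt}}\ge\|\vv\|_{Y^n}/C_n$, which finishes the proof via \eqref{norm_dual}. The main obstacle, and the step to execute carefully, is matching the four weights in $\uu$ exactly so that the $b_e^n(\uu,\vv)$ value equals $\|\uu\|_{X^n_1}^2$ while producing the same four quantities (with weights consistent with Lemma~\ref{la_stab_hom}) that bound $\|\vv\|_{Y^n}^2$.
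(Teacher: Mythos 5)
Your proposal is correct and follows essentially the same route as the paper: the third bound via Cauchy--Schwarz with the same weights (the paper groups the four terms as $2+2$ rather than $1+1+1+1$, with the same constant $\sqrt{3}$), the second bound via the conforming test functions of Lemma~\ref{la_stab}, and the first bound via the conforming/homogeneous splitting controlled by Lemmas~\ref{la_stab} and~\ref{la_stab_hom} together with the duality \eqref{norm_dual}. The only cosmetic difference is that where the paper simply states the closed form of $\|\vv\|_{Y^n,\mathrm{opt}}$, you realize the needed lower bound by exhibiting the explicit extremal trial function $\uu=(G,\FF/k_n,-\phi/k_n,-\psi/k_n^2)$, which checks out.
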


\begin{proof}
Comparing relation \eqref{norm_dual} and definition \eqref{energy} of the energy norm,
the first statement is equivalent to
\(
   \|\vv\|_{Y^n} \le C \|\vv\|_{Y^n,\mathrm{opt}}
\)
for any $\vv\in Y^n$.
By definition \eqref{testnorm_opt}, one finds that
\[
    \|\vv\|_{Y^n,\mathrm{opt}}^2
    =
    \|\frac 1{k_n}v + \div\ttau\|_{\cT_n}^2 + \frac 1{k_n}\|\nabla v+\ttau\|_{\cT_n}^2
    + \frac 1{k_n} \|\jump{\ttau\cdot\nn}\|_{-1/2,\cS'}^2
    + \frac 1{k_n^2} \|\jump{v}\|_{1/2,\cS'}^2,
\]
for $\vv\in Y^n$.
Now, for given $\vv=(v,\ttau)\in Y^n$, we define $G:=v/k_n + \pwdiv\ttau$, $\FF:=\pwnabla v+\ttau$.
Here, $\pwdiv$ and $\pwnabla$ denote the respective operators defined piecewise on the mesh $\cT_n$.
Then by Lemma~\ref{la_stab} there exists
$\vv_1:=(v_1,\ttau_1)\in H^1_0(\Omega)\times\HH(\div,\Omega)$ with
$\|\vv_1\|_{Y^n}^2=\|G\|^2 + \|\FF\|^2/k_n$. Furthermore, $\vv_0:=(v_0,\ttau_0):=\vv-\vv_1$
solves \eqref{stabh}, and $\jump{v}=\jump{v_0}$, $\jump{\ttau\cdot\nn}=\jump{\ttau_0\cdot\nn}$.
Then the mentioned relation for $\|\vv_1\|_{Y^n}$ and the final bound given
by Lemma~\ref{la_stab_hom} prove that
\begin{align*}
   \frac 12 \|\vv\|_{Y^n}^2
   &\le
   \|\vv_1\|_{Y^n}^2 + \|\vv_0\|_{Y^n}^2
   \\
   &\le
   \|G\|^2 + \frac 1{k_n}\|\FF\|^2
   +
   (4C_\mathrm{PF}^2 + 6 k_n) \Bigl( \frac1{k_n} \|\jump{\ttau\cdot\nn}\|_{-1/2,\cS_n'}^2
   + \frac 1{k_n^2} \|\jump{v}\|_{1/2,\cS_n'}^2 \Bigr)
   \\
   &\le
   \max\{1,4C_\mathrm{PF}^2+6k_n\} \|\vv\|_{Y^n,\mathrm{opt}}^2.
\end{align*}
This proves the first assertion.
By the same reasoning as before, using relation \eqref{norm_dual} and considering
continuous test functions $\vv\in H^1_0(\Omega)\times\HH(\div,\Omega)$, the second
statement is equivalent to the stability result of Lemma~\ref{la_stab}.

To show the third assertion we only have to bound the extended bilinear form
\[
   b_e^n(\uu,\vv)
   =
   (u,\frac 1{k_n} v + \div\ttau)_{\cT_n} + (\ssigma,\nabla v+\ttau)_{\cT_n} 
   - \<\hat u,\jump{\ttau\cdot\nn}\>_{\cS_n} - \<\hat\sigma,\jump{v}\>_{\cS_n}.
\]
This is done by the Cauchy-Schwarz inequality and dualities.
By definition of the skeleton dualities and trace norms we find
\begin{align*}
   \<\hat u,\jump{\ttau\cdot\nn}\>_{\cS_n}^2
   &\le
   \|\hat u\|_{1/2,\cS_n}^2 \Bigl(\frac 1{k_n}\|\ttau\|^2 + \|\div\ttau\|_{\cT_n}^2\Bigr),
   \\
   \<\hat\sigma,\jump{v}\>_{\cS_n}^2
   &\le
   k_n \|\hat\sigma\|_{-1/2,\cS_n}^2
   \Bigl(\frac 1{k_n^2}\|v\|^2 + \frac 1{k_n}\|\nabla v\|_{\cT_n}^2\Bigr),
\end{align*}
so that
\begin{align*}
   b_e^n(\uu,\vv)
   &\le
   \Bigl(\|u\|^2 + k_n\|\ssigma\|^2\Bigr)^{1/2}
   \Bigl(
      \|\frac 1{k_n}v+\div\ttau\|_{\cT_n}^2 + \frac 1{k_n}\|\nabla v+\ttau\|_{\cT_n}^2
   \Bigr)^{1/2}
   \\
   &+
   \Bigl(
      \|\hat u\|_{1/2,\cS_n}^2 + k_n\|\hat\sigma\|_{-1/2,\cS_n}^2
   \Bigr)^{1/2}
   \Bigl(
      \frac 1{k_n^2}\|v\|^2 + \frac 1{k_n}\|\nabla v\|_{\cT_n}^2
      +
      \frac 1{k_n}\|\ttau\|^2 + \|\div\ttau\|_{\cT_n}^2
   \Bigr)^{1/2}
   \\
   &\le
   \sqrt{3} \|\uu\|_{X^n_2} \|\vv\|_{Y^n}\qquad\forall\uu\in X^n,\ \vv\in Y^n,
\end{align*}
that is,
\(
   \|\uu\|_{E^n}\le \sqrt{3} \|\uu\|_{X^n_2}
\)
for any $\uu\in X^n$.
\end{proof}

%===================================================================================================
\subsection{Proof of Theorem~\ref{thm_stab}} \label{sec_pf_stab}

By design of the DPG method (i.e., selecting optimal test functions), \eqref{DPG}
is uniquely solvable (the initial approximation $u^0_h$, being an $L^2$-projection,
exists and is stable anyway). More precisely, the extended bilinear forms
$b_e^n$ are $X^n_h\times Y^n_h$ inf-sup stable and bounded with inf-sup and continuity
numbers equal to $1$, when using the energy norm(s) in $X^n_h$ that correspond(s) to
the selected $Y^n$-norm(s) in $Y^n_h$, cf.~\eqref{testnorm}.

Now, using \eqref{Enorm_discrete}, one obtains by the definition of
$\uu^n_h$ \eqref{DPGa} and the selection of test norm in $Y^n_h$
(remember the notation $\vv=(v,\ttau)$)
\begin{align*}
   \|\uu^n_h\|_{E^n}
   &=
   \sup_{\vv\in Y^n_h} \frac {b_e^n(\uu^n_h,\vv)}{\|\vv\|_{Y^n}}
   =
   \sup_{\vv\in Y^n_h} \frac {L_e^n(u^{n-1}_h,\vv)}{\|\vv\|_{Y^n}}\\
   &\le
   \sup_{\vv\in Y^n_h} \frac {\|f^n\|\, \|v\| + k_n^{-1} \|u^{n-1}_h\| \|v\|}
                           {k_n^{-1} \|v\|}
   =
   k_n \|f^n\| + \|u^{n-1}_h\|.
\end{align*}
Therefore, to finish the proof of Theorem~\ref{thm_stab}, it is enough to show that
\begin{align*}
   &\|u^n_h\|^2 + k_n \|\ssigma^n_h\|^2 \le \|\uu^n_h\|_{E^n}^2,\quad
   n=1,\ldots, N.
\end{align*}
Indeed, there holds the more general result
\begin{align} \label{stab_gen}
   &\|u\|^2 + k_n \|\ssigma\|^2 \le \|\uu\|_{E^n}^2\quad\forall\uu\in X^n,\ n=1,\ldots, N.
\end{align}
By Lemma~\ref{la_stab} we find for any given $G\in L^2(\Omega)$ and $\FF\in\LL(\Omega)$
an element $\vv=\vv(G,\FF)\in H^1_0(\Omega)\times \HH(\div,\Omega)\subset Y^n$ such that
$k_n^{-1}v+\div\ttau=G$ and $\nabla v+\ttau=\FF$ with
$\|\vv\|_{Y^n}^2=\|G\|^2 + k_n^{-1}\|\FF\|^2$.
By this construction we obtain
\begin{align*}
   \Bigl(\|u\|^2 + k_n \|\ssigma\|^2\Bigr)^{1/2}
   =
   \sup_{G\in L^2(\Omega),\, \FF\in\LL^2(\Omega)}
   \frac {(u,G) + (\ssigma,\FF)}{\|\vv(G,\FF)\|_{Y^n}}
   \le
   \sup_{\vv\in Y^n} \frac {b_e^n(\uu,\vv)}{\|\vv\|_{Y^n}}
   =
   \|\uu\|_{E^n}.
\end{align*}
This is \eqref{stab_gen} and finishes the proof of Theorem~\ref{thm_stab}.

%===================================================================================================
\subsection{Proof of Theorem~\ref{thm_Cea}} \label{sec_pf_Cea}

For every time step $t_n$, let us define the DPG projection $\tilde\uu^n_h\in X^n_h$
of the exact solution at $t_n$, $\uu(t_n)=(u(t_n),\nabla u(t_n),\hat u(t_n),\hat\sigma(t_n))$
with obvious definitions of $\hat u(t_n)$ and $\hat\sigma(t_n)$, by
\[
   b_e^n(\tilde\uu^n_h,\vv) = b_e^n(\uu(t_n),\vv)\qquad\forall \vv\in Y^n_h,\ n=1,\ldots, N.
\]
Later, we will also need the first two components $\tilde u^n_h$ and $\tilde\ssigma^n_h$ of $\tilde\uu^n_h$.
Also note that there holds by Lemma~\ref{la_norms} and the best-approximation property of the DPG scheme
\begin{equation} \label{errnu}
   \mathrm{err}_n(u) := \|u(t_n)-\tilde u^n_h\|
   \le \|\uu(t_n)-\tilde\uu^n_h\|_{E^n} = \min_{\ww\in X^n_h} \|\uu(t_n)-\ww\|_{E^n}.
\end{equation}
We start by bounding $\|\tilde\uu^n_h-\uu^n_h\|_{E^n}$.
Relation \eqref{Enorm_discrete} means that
\[
   \|\tilde\uu^n_h-\uu^n_h\|_{E^n}^2 = b_e^n(\tilde\uu^n_h-\uu^n_h,\tilde\vv^n_h)
   \qquad\text{with}\quad
   \tilde\vv^n_h := (\tilde v^n_h,\tilde\ttau^n_h) := \Theta^n(\tilde\uu^n_h-\uu^n_h) \in Y^n_h.
\]
Recalling the definition of the extended bilinear form one notes that
\[
   b_e^n(\uu(t_n),\vv) = (f^n-\dot u(t_n)+\frac 1{k_n}u(t_n),v)\qquad\forall\vv=(v,\ttau)\in Y^n.
\]
Therefore,
\begin{align*}
   \|\tilde\uu^n_h-\uu^n_h\|_{E^n}^2
   &=
   b_e^n(\tilde\uu^n_h,\tilde\vv^n_h) - b_e^n(\uu^n_h,\tilde\vv^n_h)
   =
   b_e^n(\uu(t_n),\tilde\vv^n_h) - L_e^n(u^{n-1}_h;\tilde\vv^n_h)\\
   &=
   (f^n-\dot u(t_n)+\frac 1{k_n}u(t_n),\tilde v^n_h) - (f^n + \frac 1{k_n} u^{n-1}_h,\tilde v^n_h)\\
   &=
   (u(t_n)-u(t_{n-1}) - k_n\dot u(t_n), \frac 1{k_n}\tilde v^n_h)
   +
   (u(t_{n-1})-u^{n-1}_h,\frac 1{k_n}\tilde v^n_h)\\
   &\le
   \Bigl(\|u(t_n)-u(t_{n-1}) - k_n\dot u(t_n)\| + \|u(t_{n-1})-u^{n-1}_h\|\Bigr)
   \frac 1{k_n}\|\tilde v^n_h\|.
\end{align*}
By the selection of test norm \eqref{testnorm} and using relation \eqref{energy_ttt} we can bound
\[
   \frac 1{k_n}\|\tilde v^n_h\| \le \|\tilde\vv^n_h\|_{Y^n}
   = \|\Theta^n(\tilde\uu^n_h-\uu^n_h)\|_{Y^n} = \|\tilde\uu^n_h-\uu^n_h\|_{E^n}.
\]
Using also the bound
$\|\tilde u^n_h-u^n_h\|\le \|\tilde\uu^n_h-\uu^n_h\|_{E^n}$
by Lemma~\ref{la_norms} we have therefore shown that
\[
   \|\tilde u^n_h-u^n_h\|
   \le
   \|\tilde\uu^n_h-\uu^n_h\|_{E^n}
   \le
   \|u(t_n)-u(t_{n-1}) - k_n\dot u(t_n)\| + \|u(t_{n-1})-u^{n-1}_h\|.
\]
Applying repeatedly this estimation after bounding the last term like
\[
   \|u(t_{n-1})-u^{n-1}_h\| \le
   \|u(t_{n-1})-\tilde u^{n-1}_h\| + \|\tilde u^{n-1}_h-u^{n-1}_h\|
   =
   \mathrm{err}_{n-1}(u) + \|\tilde u^{n-1}_h-u^{n-1}_h\|
\]
we conclude that
\[
   \|\tilde\uu^n_h-\uu^n_h\|_{E^n}
   \le
   \|u_0-u^0_h\|
   +
   \sum_{j=1}^{n-1} \mathrm{err}_j(u)
   +
   \sum_{j=1}^n \|u(t_n)-u(t_{n-1}) - k_n\dot u(t_n)\|.
\]
Finally, the triangle inequality and bound \eqref{errnu} for $\mathrm{err}_j(u)$ show that
\begin{align*}
\lefteqn{
   \|\uu(t_n)-\uu^n_h\|_{E^n}
}\\
   &\le
   \|u_0-u^0_h\|
   +
   \sum_{j=1}^n \min_{\ww\in X^j_h} \|\uu(t_j)-\ww\|_{E^j}
   +
   \sum_{j=1}^n \|u(t_n)-u(t_{n-1}) - k_n\dot u(t_n)\|.
\end{align*}
This is the stated error estimate. The other two bounds are immediate by Lemma~\ref{la_norms}.
This finishes the proof of Theorem~\ref{thm_Cea}.

%===================================================================================================
\section{Numerical experiments} \label{sec_num}

We present some numerical experiments for two problems in two dimensions, with
domain $\Omega=(0,1)\times (0,1)$.
Throughout we use constant in time, uniform triangular meshes $\cT_n=\cT$ with skeleton $\cS$,
and constant time steps $k_n=k=T/N$ with final time $T=0.1$. (There is no problem in selecting
larger final times, but then our model solutions are close to zero and would need to be
rescaled to provide reasonable tests.)

The trial spaces $X^n_h=X_h$ are
\begin{align*}
  X_h := P^0(\cT) \times \left[ P^0(\cT) \right]^2 \times S^1_0(\cS) \times P^0(\cS).
\end{align*}
Here, $P^0(\cT)$ and $P^0(\cS)$ indicate spaces of piecewise constant functions whereas
$S^1_0(\cS)$ is the space of continuous, piecewise linear functions on the skeleton with
zero trace on $\partial\Omega$.

The trial-to-test operator $\Theta^n=\Theta$ needed for the computation of optimal test functions
is approximated by solving, instead of \eqref{ttt}, the corresponding discrete problem with
enriched test space $\tilde Y^n_h=\tilde Y_h$ (instead of $Y^n$) that uses the same mesh
$\cT$ and piecewise polynomials of degree two (resp. three) for $v$ (resp. $\ttau$).
Our choice of enriched test space is based on the analysis
in~\cite{GopalakrishnanQ_14_APD} where the authors show that, for the Poisson equation, a
suitable strategy is to take the corresponding trial spaces and increase the polynomial
degrees by the space dimension $d=2$.
An analysis for singularly perturbed problems is open.

In the results below we use the following notation:
\begin{align*}
   \mathrm{err}(u) &= \|u(T)-u^N_h\|,\qquad
   \mathrm{err}(\ssigma) = \sqrt{k}\|\ssigma(T)-\ssigma^N_h\|,\\
   \mathrm{err}(\hat u) &= \Bigl(\|u(T) - \tilde u_h\|^2 + k\|\nabla u(T)-\nabla\tilde u_h\|^2\Bigr)^{1/2},\\
   \mathrm{err}(\hat\sigma) &=
          \sqrt{k}
          \Bigl(\|\ssigma(T) - \tilde\ssigma_h\|^2 + k\|\div\ssigma(T)-\div\tilde\ssigma_h\|^2\Bigr)^{1/2},\\
   \mathrm{err}(u_0) &= \|u_0-u^0_h\|,\qquad
   \mathrm{err}(u_\mathrm{ex}) = \|\uu(T)-\uu^N_h\|_{E^N}.
\end{align*}
Here, $\tilde u_h$ is the nodal interpolant in $H^1_0(\Omega)$ of $\hat u^N_h$ and by the definition
of the trace norm, $\mathrm{err}(\hat u)$ is an upper bound for $\|u(T) - \hat u^N_h\|_{1/2,\cS}$.
Similarly, $\tilde\ssigma_h$ is the lowest-order Raviart-Thomas interpolant of $\hat\sigma^N_h$ and,
therefore, $\sqrt{k}\|\hat\sigma(T)-\hat\sigma^N_h\|_{-1/2,\cS}\le \mathrm{err}(\hat\sigma)$.
These terms provide an upper bound for the error in $X^N_2$-norm, i.e., in the final time $T$:
\[
   \|\uu(T)-\uu^N_h\|_{X^N_2}^2
   \le
   \mathrm{err}(u)^2 + \mathrm{err}(\ssigma)^2 + \mathrm{err}(\hat u)^2 + \mathrm{err}(\hat\sigma)^2.
\]

\paragraph{Example 1.} We select the exact solution
\[
   u((x,y),t) = e^{-\pi^2t}\sin(\pi x)\sin(\pi y)
\]
so that $f=\pi^2 u$ and $u_0=u(\cdot,t=0)\in H^1_0(\Omega)$.
For the choice $k=\sqrt{h}/20$, Figure~\ref{fig1a} shows the corresponding errors and
confirms the prediction by Corollary~\ref{cor_conv}, that is, convergence order $O(h^{1/2})$.
The indicated slopes refer to $h$, not the number of elements.
Figure~\ref{fig1b} shows the ratios
\[
  \frac {(\|u^N_h\|^2+k\|\ssigma^N_h\|^2)^{1/2}}{\|u_0\|+k\sum_{n=1}^N \|f^n\|}
\]
which we expect to be bounded by $1$ by Theorem~\ref{thm_stab}.
The curve $S_1$ shows the values for $k = h/20$,
and $S_2$ for the previous selection of $k$.
In both cases the stability claim is confirmed.

\begin{figure}
\centerline{\includegraphics[width=0.7\textwidth,height=0.4\textwidth]{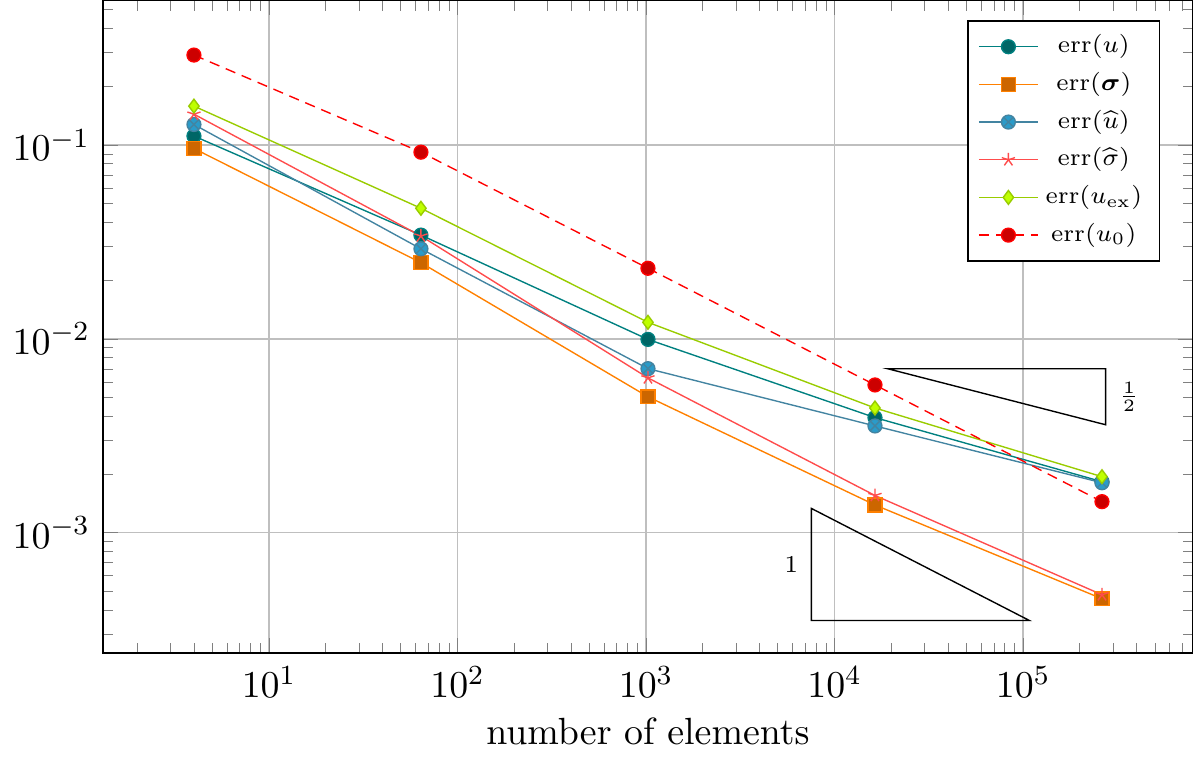}}
\caption{Errors for Example 1.}
\label{fig1a}
\end{figure}

\begin{figure}
\centerline{\includegraphics[width=0.7\textwidth,height=0.4\textwidth]{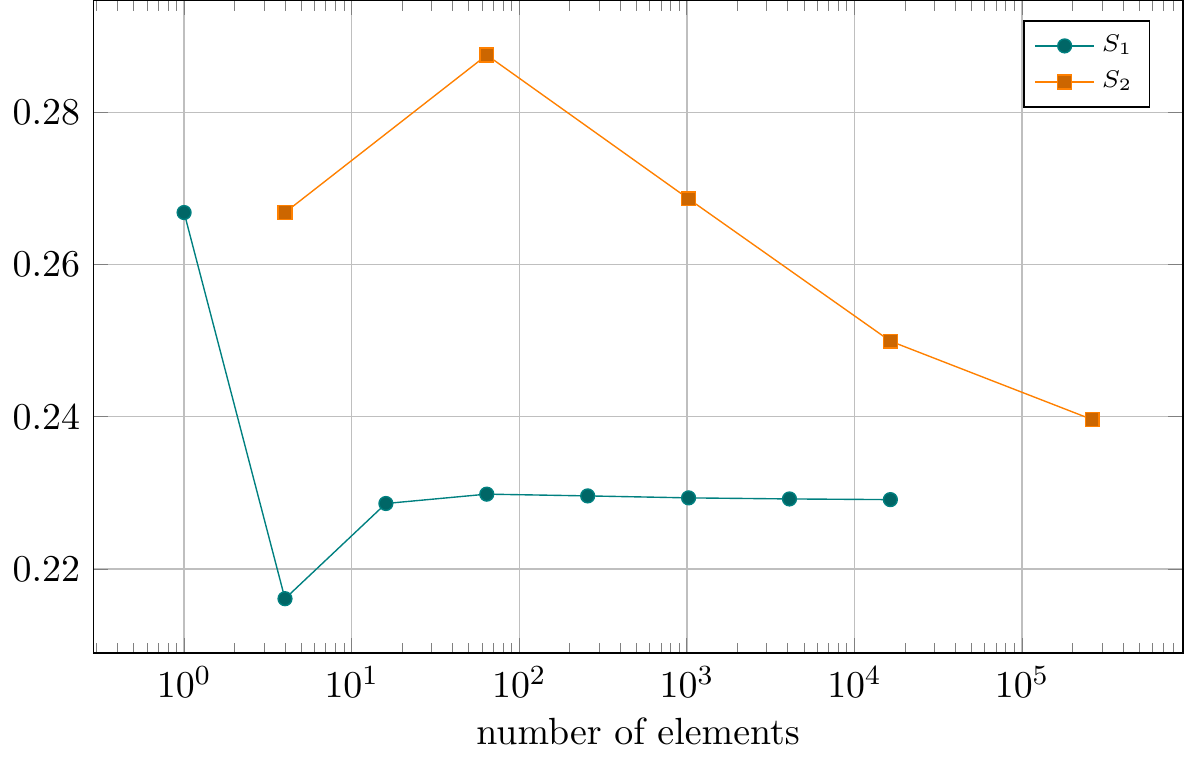}}
\caption{Testing stability for Example 1.}
\label{fig1b}
\end{figure}

\paragraph{Example 2.}
In this case, we test our method for a singular solution where the initial datum
does not satisfy the homogeneous boundary condition. We put
\[
   u_0(x,y) = (1-x) \sqrt{2} \sin (\pi y)
            = \frac {2\sqrt{2}}{\pi} \sin(\pi y) \sum_{j=1}^\infty \frac {\sin(j\pi x)}j
\]
and calculate $u$ by Fourier expansion:
\[
   u((x,y),t) = \frac {2\sqrt{2}}{\pi} \sin(\pi y)
                 \sum_{j=1}^\infty  e^{-(j^2+1)\pi^2 t} \frac {\sin(j\pi x)}j.
\]
For the numerical experiment we consider the first 1000 terms.
Figure~\ref{fig2} presents the results for the combination $k=\sqrt{h}/10$.
Despite of not fulfilling the regularity assumptions of Corollary~\ref{cor_conv}
we do observe convergence of order $O(h^{1/2})$.
As before, the indicated slopes refer to $h$.

\begin{figure}
\centerline{\includegraphics[width=0.7\textwidth,height=0.4\textwidth]{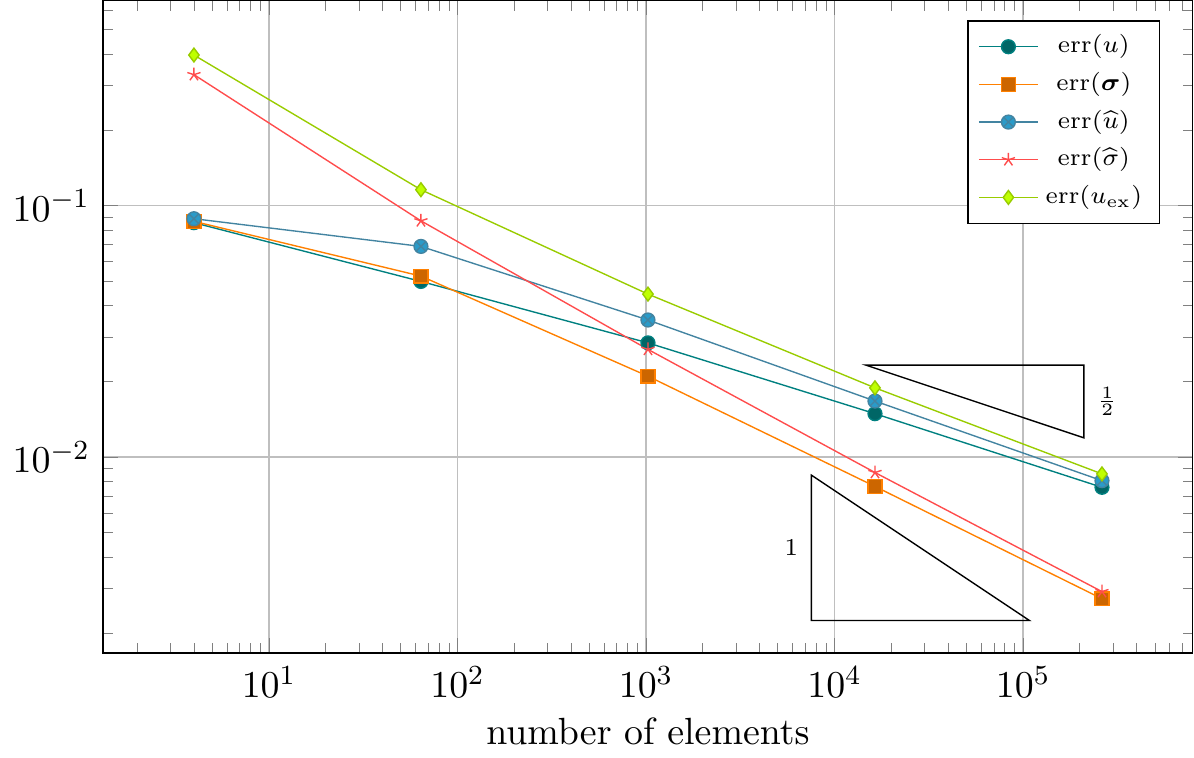}}
\caption{Errors for Example 2.}
\label{fig2}
\end{figure}

%===================================================================================================
\bibliographystyle{siam}
\bibliography{/home/norbert/tex/bib/heuer,/home/norbert/tex/bib/bib}

\begin{thebibliography}{10}

\bibitem{BochevGS_04_ISS}
{\sc P.~B. Bochev, M.~D. Gunzburger, and J.~N. Shadid}, {\em On inf-sup
  stabilized finite element methods for transient problems}, Comput. Methods
  Appl. Mech. Engrg., 193 (2004), pp.~1471--1489.

\bibitem{Bornemann_90_AMA}
{\sc F.~A. Bornemann}, {\em An adaptive multilevel approach to parabolic
  equations}, Impact Comput. Sci. Engrg., 2 (1990), pp.~279--317.

\bibitem{BrambleT_72_SLS}
{\sc J.~H. Bramble and V.~Thom\'ee}, {\em Semidiscrete-least squares methods
  for parabolic boundary value problem}, Math. Comp., 26 (1972), pp.~633--648.

\bibitem{BroersenS_14_RPG}
{\sc D.~Broersen and R.~Stevenson}, {\em A robust {Petrov}-{Galerkin}
  discretisation of convection-diffusion equations}, Comput. Math. Appl., 68
  (2014), pp.~1605--1618.

\bibitem{CarstensenDG_BSF}
{\sc C.~Carstensen, L.~Demkowicz, and J.~Gopalakrishnan}, {\em Breaking spaces
  and forms for the {DPG} method and applications including {Maxwell}
  equations}, {arXiv}: 1507.05428, 2015.

\bibitem{ChanHBTD_14_RDM}
{\sc J.~Chan, N.~Heuer, T.~Bui-Thanh, and L.~Demkowicz}, {\em Robust {DPG}
  method for convection-dominated diffusion problems {II}: {Adjoint} boundary
  conditions and mesh-dependent test norms}, Comput. Math. Appl., 67 (2014),
  pp.~771--795.

\bibitem{DemkowiczG_11_ADM}
{\sc L.~Demkowicz and J.~Gopalakrishnan}, {\em Analysis of the {DPG} method for
  the {Poisson} problem}, SIAM J. Numer. Anal., 49 (2011), pp.~1788--1809.

\bibitem{DemkowiczG_11_CDP}
\leavevmode\vrule height 2pt depth -1.6pt width 23pt, {\em A class of
  discontinuous {Petrov-Galerkin} methods. {Part II}: {O}ptimal test
  functions}, Numer. Methods Partial Differential Eq., 27 (2011), pp.~70--105.

\bibitem{DemkowiczH_13_RDM}
{\sc L.~Demkowicz and N.~Heuer}, {\em Robust {DPG} method for
  convection-dominated diffusion problems}, SIAM J. Numer. Anal., 51 (2013),
  pp.~2514--2537.

\bibitem{EllisCD_RDM}
{\sc T.~Ellis, J.~Chan, and L.~Demkowicz}, {\em Robust {DPG} method for
  transient convection-diffusion}, {ICES Report} 15-21, The University of Texas
  at Austin, 2015.

\bibitem{FuehrerH_RCD}
{\sc T.~F{\"u}hrer and N.~Heuer}, {\em Robust coupling of {DPG} and {BEM} for a
  singularly perturbed transmission problem}, {arXiv}: 1603.05164, 2016.

\bibitem{GopalakrishnanQ_14_APD}
{\sc J.~Gopalakrishnan and W.~Qiu}, {\em An analysis of the practical {DPG}
  method}, Math. Comp., 83 (2014), pp.~537--552.

\bibitem{Harari_04_SSF}
{\sc I.~Harari}, {\em Stability of semidiscrete formulations for parabolic
  problems at small time steps}, Comput. Methods Appl. Mech. Engrg., 193
  (2004), pp.~1491--1516.

\bibitem{HeuerK_RDM}
{\sc N.~Heuer and M.~Karkulik}, {\em A robust {DPG} method for singularly
  perturbed reaction-diffusion problems}, {arXiv}: 1509.07560, 2015.

\bibitem{MajidiS_01_LSG}
{\sc M.~Majidi and G.~Starke}, {\em Least-{Squares} {Galerkin} methods for
  parabolic problems {I}: semidiscretization in time}, SIAM J. Numer. Anal., 39
  (2001), pp.~1302--1323.

\bibitem{MajidiS_02_LSG}
\leavevmode\vrule height 2pt depth -1.6pt width 23pt, {\em Least-{Squares}
  {Galerkin} methods for parabolic problems {II}: the fully discrete case and
  adaptive algorithms}, SIAM J. Numer. Anal., 39 (2002), pp.~1648--1666.

\bibitem{NiemiCC_13_ASD}
{\sc A.~H. Niemi, N.~O. Collier, and V.~M. Calo}, {\em Automatically stable
  discontinuous {P}etrov-{G}alerkin methods for stationary transport problems:
  quasi-optimal test space norm}, Comput. Math. Appl., 66 (2013),
  pp.~2096--2113.

\bibitem{Rothe_30_ZPR}
{\sc E.~Rothe}, {\em Zweidimensionale parabolische {R}andwertaufgaben als
  {G}renzfall eindimensionaler {R}andwertaufgaben}, Math. Ann., 102 (1930),
  pp.~650--670.

\bibitem{Thomee_06_GFE}
{\sc V.~Thom{\'e}e}, {\em Galerkin Finite Element Methods for Parabolic
  Problems}, vol.~25 of Springer Series in Computational Mathematics,
  Springer-Verlag, Berlin, second~ed., 2006.

\bibitem{ZitelliMDGPC_11_CDP}
{\sc J.~Zitelli, I.~Muga, L.~Demkowicz, J.~Gopalakrishnan, D.~Pardo, and V.~M.
  Calo}, {\em A class of discontinuous {P}etrov-{G}alerkin methods. {P}art
  {IV}: the optimal test norm and time-harmonic wave propagation in 1{D}}, J.
  Comput. Phys., 230 (2011), pp.~2406--2432.

\end{thebibliography}
\end{document}